\documentclass[10pt]{amsart}

\usepackage{amsmath, amsthm}
\usepackage{amssymb}
\usepackage{amscd}
\usepackage{latexsym}
\usepackage[latin1]{inputenc}
\usepackage[shortlabels]{enumitem}
\usepackage{extpfeil}

\usepackage{xypic}
\usepackage{graphicx}
\usepackage{changebar,color}

\usepackage{tikz-cd}

\newtheorem{theorem}{Theorem}[section]

\newtheorem{proposition}[theorem]{Proposition}

\theoremstyle{remark}

\theoremstyle{definition}

\theoremstyle{definition}

\parskip        1mm
\oddsidemargin  0in
\evensidemargin 0in
\textwidth      6in
\topmargin      0in
\textheight     8.5in
\headheight     .1in
\headsep        .4in


\numberwithin{equation}{section}



\usetikzlibrary{decorations.markings}
\makeatletter
\tikzcdset{
open/.code={\tikzcdset{hook, circled};},
closed/.code={\tikzcdset{hook, slashed};},
open'/.code={\tikzcdset{hook', circled};},
closed'/.code={\tikzcdset{hook', slashed};},
circled/.code={\tikzcdset{markwith={\draw (0,0) circle (.375ex);}};},
slashed/.code={\tikzcdset{markwith={\draw[-] (-.4ex,-.4ex) -- (.4ex,.4ex);}};},
markwith/.code={
\pgfutil@ifundefined{tikz@library@decorations.markings@loaded}%
{\pgfutil@packageerror{tikz-cd}{You need to say %
\string\usetikzlibrary{decorations.markings} to use arrow with markings}{}}{}%
\pgfkeysalso{/tikz/postaction={/tikz/decorate,
/tikz/decoration={
markings,
mark = at position 0.5 with
{#1}}}}},
}
\makeatother


\renewcommand{\phi}{\varphi}

\def\A{\mathbb{A}}

\def\N{\mathbb{N}}
\def\Z{\mathbb{Z}}
\def\PP{\mathbb{P}}
\def\Q{\mathbb{Q}}

\def\C{\mathbb{C}}

\def\H{\mathbb{H}}

\newcommand{\cN}{{\mathcal N}}

\newcommand{\cO}{{\mathcal O}}

\newcommand{{\OL}}{{{\mathcal O}_L}}

\newcommand{\CH}{{\mathrm{CH}}}




\newcommand{\ra}{\rightarrow}
\newcommand{\lrasim}{\stackrel{\sim}{\longrightarrow}}
\newcommand{\lra}{\longrightarrow}

\newcommand{\hgt}{{\mathrm{ht}}}



\newcommand{\Dcirc}{{\mathaccent23 D}}

\renewcommand{\epsilon}{\varepsilon}

\newcommand{\GK}{\mathcal{GK}}

\setcounter{tocdepth}{2}

\title[Pencils of  hypersurfaces, Griffiths heights and geometric invariant theory. II]{Pencils of projective hypersurfaces, Griffiths heights and geometric invariant theory. II \\ Hypersurfaces with semihomogeneous singularities}

\author{Thomas Mordant}

\date{\today}

\begin{document}

\begin{abstract} This paper establishes the formula for the  stable Griffiths height of the middle-dimensional cohomology of a pencil of projective hypersurfaces $H$, with semihomogeneous singularities,  over some smooth projective curve $C$, that appears as Theorem 5.1 in the first part of this paper \cite{MordantGIT1}. 

The proof of this formula relies on the strategy developed in \cite{Mordant22} to derive an expression for this Griffiths height when the only singularities of the fibers of $H$ over $C$ are ordinary double points. To deal with general semihomogeneous singularities, we complement this strategy by the construction of a finite covering $C'$ of $C$ such that the pencil $H'= H\times_C C'$ over $C'$ admits a smooth model $\widetilde{H}'$ with semistable fibers with smooth components. This allows us to circumvent the delicate issue of the determination of the elementary exponents attached to the singular fibers of~$H/C$. 
\end{abstract}

\maketitle

\tableofcontents

\section{Introduction}

\subsection{The stable Griffiths height of the middle-dimensional cohomology of a pencil of projective hypersurfaces with semihomogeneous singularities}

This paper is a sequel to \cite{MordantGIT1}, and is devoted to the proof of \cite[Theorem 5.1]{MordantGIT1}, which provides a closed formula for the stable Griffiths height of the middle-dimensional cohomology of a pencil of projective hypersurfaces with semihomogeneous singularities.  

Let us recall the statement of this theorem:

\begin{theorem}
\label{pas intro GK hyp P(E) hom crit}
Let $C$ be a connected smooth projective complex curve with generic point $\eta$, $E$ a vector bundle of rank~$N+1 \geq 2$ over $C$, and $H \subset \PP(E) $ an horizontal hypersurface of relative degree~$d\geq 2$, smooth over $\C$.
Let us assume that the set of critical points $\Sigma$ of the restriction 
$$f := \pi_{\mid H} : H \lra C$$
is finite, and that the restriction $$\pi_{| \Sigma} : \Sigma  \lra C$$  is injective. 
For every point $P$ in $H$, let $\delta_P$ be the multiplicity at $P$ of the projective hypersurface~$H_{\pi(P)}:= f^{-1}(\pi(P))$ in $\PP(E_{\pi(P)})$.\footnote{Observe that the positive integer $\delta_P$ is at least 2 if and only if $P$ is in $\Sigma.$}
Let us further assume that for every point $P$ of $\Sigma$, the projective tangent cone~$\PP(C_P H_{\pi(P)})$ of $H_{\pi(P)}$ at $P$ is smooth.

Then the following equality of integers holds:
\begin{equation} \label{f(sigma) P(E) hom crit}
\sum_{P \in \Sigma} (\delta_P - 1)^N = (N+1) (d-1)^N \, \hgt_{int}(H/C).
\end{equation}
Moreover the following equality of rational numbers holds:
\begin{equation} \label{pas intro GKstabXL P(E) hom crit}
\hgt_{GK, stab} \big(\H^{N-1}(H_\eta/C_\eta)\big) 
= - (N+1) w_{N,d} \; \hgt_{int}(H/C) + \sum_{P \in \Sigma} w_{N,\delta_P},
\end{equation}
where for every positive integer $\delta,$ $w_{N,\delta}$ is the rational number in $(1/12) \Z$ defined by:
\begin{equation}\label{wdef}
w_{N,\delta} = (\delta-1)\big[(N \delta + 1) (\delta - 1)^{N-1} + (-1)^N (\delta + 1) \big]/(12 \delta^2).
\end{equation}

\end{theorem}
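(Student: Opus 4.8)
The plan is to prove the two displayed equalities by a degeneration argument that reduces the general semihomogeneous case to the situation of a semistable model with smooth components, where the Griffiths height can be computed by the methods already developed for ordinary double points in \cite{Mordant22}. First I would treat the purely numerical identity \eqref{f(sigma) P(E) hom crit}, which is local on $C$ and depends only on the singularity data. The quantity $\hgt_{int}(H/C)$ should be expressible as a relative self-intersection or Chern number of a line bundle on $H$ (essentially a relative canonical or conormal class), computable by a Grothendieck--Riemann--Roch or excess-intersection calculation. The key local input is that, at each critical point $P$, the hypersurface has multiplicity $\delta_P$ with smooth projective tangent cone $\PP(C_P H_{\pi(P)})$; this semihomogeneity lets me model $H$ near $P$ by the affine cone equation given by a smooth degree-$\delta_P$ form, whose local Milnor-type contribution to the intersection number is exactly $(\delta_P-1)^N$. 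Summing these local contributions and comparing with the global intersection number computed on the smooth total space yields the factor $(N+1)(d-1)^N$, which is the Chern-number contribution of a generic smooth fiber. The arithmetic here is a careful bookkeeping of excess intersection at the $\Sigma$-points against the expected global degree.

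Next, for the height formula \eqref{pas intro GKstabXL P(E) hom crit}, the strategy is to pass to the finite cover $C'$ of $C$ described in the abstract. By construction $H' = H\times_C C'$ admits a smooth model $\widetilde H'$ whose fibers are semistable with smooth components, and on such a model the stable Griffiths height is computed by the same techniques as in the ordinary-double-point case of \cite{Mordant22}. The idea is to establish a precise comparison between $\hgt_{GK,stab}(\H^{N-1}(H_\eta/C_\eta))$ and the analogous height for $\widetilde H'/C'$, tracking how the stable Griffiths height transforms under the base change $C'\to C$ (it multiplies by the degree of the cover, up to the correction terms localized over the branch and critical locus) and under passage to the semistable model (which contributes the semistable reduction defect at each singular fiber). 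The local contribution at a point $P\in\Sigma$ to this comparison is precisely the rational number $w_{N,\delta_P}$ of \eqref{wdef}, and the global term $-(N+1)w_{N,d}\,\hgt_{int}(H/C)$ comes from the generic behaviour of the height density over $C$, matching the structure of \eqref{f(sigma) P(E) hom crit}.

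The technical heart is therefore a local computation near each $P\in\Sigma$: one must identify the semistable reduction of the family $t\mapsto \{$smooth degree-$\delta_P$ cone $= t\}$ after base change, resolve the resulting total space to obtain smooth components, and compute the induced change in the Hodge-theoretic / Griffiths-height invariant of the vanishing cohomology. Because the tangent cone is smooth, the local monodromy and the limiting mixed Hodge structure attached to $H^{N-1}$ are governed by a single homogeneous form, so the local Griffiths-height jump should admit the closed-form expression \eqref{wdef}; verifying that this local jump is exactly $w_{N,\delta_P}$ is where I expect most of the work to lie. The global assembly then follows by additivity of the stable Griffiths height over the finitely many singular fibers together with the comparison over the cover $C'$, the denominator $12\delta^2$ reflecting the standard $1/12$ appearing in the Noether-type formula for the relevant Hodge bundle together with the contribution of the order-$\delta$ ramification.

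The step I expect to be the main obstacle is controlling the behaviour of the stable Griffiths height under the two simultaneous operations of base change to $C'$ and semistable reduction, and in particular proving that the local correction term at each $P$ depends only on $(N,\delta_P)$ and equals \eqref{wdef} independently of the finer geometry of the cover. This requires showing that the delicate elementary exponents attached to the singular fibers of $H/C$ — which the abstract explicitly says one wishes to avoid computing directly — can be bypassed because they cancel or combine into the closed form $w_{N,\delta_P}$ after the semistable reduction has been performed; establishing this invariance, rather than the individual height computations, is the crux of the argument.
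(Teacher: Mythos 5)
Your overall strategy does coincide with the paper's: the identity \eqref{f(sigma) P(E) hom crit} comes from identifying the local contribution at each $P\in\Sigma$ with the Milnor number $(\delta_P-1)^N$ of a semihomogeneous singularity (realized as the multiplicity of the zero cycle of a regular section of $\Omega^1_{\PP(E)/C\,|H}\otimes L_{|H}$, hence as a top Chern class), and \eqref{pas intro GKstabXL P(E) hom crit} comes from base-changing to a cover $\sigma:C'\to C$ ramified to order $\delta_P$ over $\pi(P)$, producing a model $\widetilde H'$ with reduced strict normal crossing fibers, and applying the Griffiths-height formula of \cite{Mordant22} there. The genuine gap is that you name the crux --- ``establishing this invariance'' so that the elementary exponents are bypassed --- but supply no mechanism for it, and the mechanism is not a cancellation or an abstract invariance of a ``local Griffiths-height jump'': it is the concrete geometric statement of Proposition \ref{both omega iso}. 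One blows up $\Sigma$ in $X:=\PP(E)$ and $\Sigma'$ in $X':=X\times_C C'$ simultaneously; because the ramification index is exactly $\delta_P$, the scheme $(\tau_{|H'}\circ\nu')^{\ast}(\Sigma)$ equals the reduced exceptional divisor $E'$, so a morphism $\upsilon:\widetilde H'\to\widetilde H$ exists and satisfies $\upsilon^*\omega^1_{\widetilde H/C}\simeq\omega^1_{\widetilde H'/C'}$. By the projection formula this converts the Chern-class integral over $\widetilde H'$ into $\deg(\sigma)$ times the corresponding integral over $\widetilde H$, which descends to $H$; since the monodromy over $C'$ is unipotent, the stable height is $(1/\deg\sigma)$ times the height over $C'$ and the factor $\deg(\sigma)$ cancels. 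If you instead tried to compute the local jump from the limiting mixed Hodge structure of $t=F_{\delta_P}(x)$, you would be forced back into exactly the elementary-exponent computation the construction is designed to avoid.

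A second, smaller gap: the formula inherited from \cite{Mordant22} computes the alternating sum $\sum_{n}(-1)^{n-1}\hgt_{GK}\big(\H^n(H'_{\eta'}/C'_{\eta'})\big)$, not the single middle-degree term, and the stable Griffiths height is not ``additive over the finitely many singular fibers'' as you assert. Extracting $\hgt_{GK}\big(\H^{N-1}\big)$ requires the relative ampleness of $\cO_{\PP(E)}(H)$, the weak Lefschetz theorem and Poincar\'e duality (Proposition \ref{GK Lefschetz semihom}) to identify the Griffiths heights in degrees $n\neq N-1$ with those of the ambient projective bundle over $C'$, which vanish; this step is genuinely needed and is absent from your outline.
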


The stable Griffiths height $\hgt_{GK, stab} \big(\H^{N-1}(H_\eta/C_\eta)\big)$ of the variation of Hodge structures $$\H^{N-1}(H_\eta/C_\eta)$$ attached to the middle-dimensional relative cohomology of  the pencil $H/C$ is defined in \cite[Section 2]{MordantGIT1}, and the  intersection-theoretic height $\hgt_{int}(H/C)$ of the hypersurface $H$ in the projective bundle~$\PP(E)$ in \cite[3.2.1]{MordantGIT1}.  

The requirement in Theorem \ref{pas intro GK hyp P(E) hom crit} on the  projective tangent cone~$\PP(C_P H_{\pi(P)})$ of $H_{\pi(P)}$ at any point $P$ of $\Sigma$ to be smooth precisely means that $P$ is a semihomogeneous singularity of $H_{\pi(P)}$. We refer the reader to \cite[Part II, 12 and 13]{AGZV85} for a general presentation and references concerning semihomogeneous (and more generally semiquasihomogeneous) singularities. 

In the special case where  the multiplicities $\delta_P$ of the points $P$ of $\Sigma$ are all  equal to 2 --- or equivalently when the singularities of the fibers of $H/C$ are ordinary double points --- then the equality \eqref{pas intro GKstabXL P(E) hom crit} in Theorem \ref{pas intro GK hyp P(E) hom crit} is easily seen, by means of  \eqref{f(sigma) P(E) hom crit}, to become the equality:
$$\mathrm{ht}_{GK,stab}\big(\H^{N-1}(H_\eta/C_\eta)\big)
= F_{stab}(d,N) \,  \mathrm{ht}_{int}(H/C),$$
established in \cite[Theorem 1.4.2]{Mordant22}, and used in the first part of this paper (see \cite[4.2]{MordantGIT1}).

\subsection{Main Theorem}

Theorem \ref{pas intro GK hyp P(E) hom crit} will be a consequence of a more general theorem, concerning hypersurfaces with semihomogeous singularities in arbitrary smooth pencils. 
\subsubsection{Notation}\label{Not1} Consider a connected smooth projective complex curve $C$, with generic point $\eta$,  and:
$$\pi: X \lra C$$
 a smooth projective morphism of (necessarily smooth) complex projective varieties, of fibers of pure    dimension $N.$ 
 
 Consider also a non-singular hypersurface  $H$ in $X$ such that the following conditions are satisfied:
\begin{enumerate}[(i)]
\item the set $\Sigma$ of critical points of the restriction $\pi_{\mid H}: H \ra C$ is finite;\footnote{This implies that $\pi_{\mid H}$ is a flat morphism.}

\item the restriction $\pi_{\mid  \Sigma} : \Sigma \ra C$ is injective;

\item for every point $P$ in $\Sigma,$ the projective tangent cone $\PP(C_P H_{\pi(P)})$  is non-singular. 
\end{enumerate}

We shall denote by:
$$\Delta := \pi(\Sigma)$$
the  ``locus of bad reduction" of $\pi_{\mid H}: H \ra C$.

For every $x \in \Delta$,  
we shall denote by $P_x$ the unique point in $\pi_{\mid \Sigma}^{-1}(x)$,  and by $\delta_{P_x}$ the multiplicity at~$P_x$  of the fiber $H_{x}.$  Observe that the hyperplanes $T_{P_x}X_x$ and $T_{P_x} H$ in $T_{P_x} X$ coincide, and that~$\delta_{P_x}$  is precisely the degree of the projective tangent cone $\PP(C_{P_x} H_{x})$ seen as an hypersurface in the projective space $\PP(T_{P_x} X_{x}) =  \PP(T_{P_x} H) \simeq \PP^{N-1}$.

We shall also denote by $L$ the line bundle $\cO_X(H)$ on $X.$

\subsubsection{}\label{112} The following theorem is the main result of this paper. 

\begin{theorem}\label{GK semi hom in X}
With the above notation, the following equality holds in $\CH_0(X)$:
\begin{equation}\label{sing hom hyp}
\sum_{P \in \Sigma} (\delta_{P}-1)^N \,[P] 
= [(1 - c_1(L))^{-1} c(\Omega^1_{X/C})]^{(N+1)}.
\end{equation}

Furthermore, if the line bundle $L$ on $X$ is ample relatively to the morphism $\pi,$ then the following equality of rational numbers holds:
\begin{multline}\label{eq: GK semi hom in X}
\hgt_{GK, stab}\big (\H^{N-1}(H_\eta / C_\eta)\big)
= \hgt_{GK}\big(\H^{N-1}(X/C)\big) - \hgt_{GK}\big(\H^N(X/C)\big) + \hgt_{GK}\big(\H^{N+1}(X/C)\big)\\+ 1/12 \int_X (1 - c_1(L))^{-1} c_1(\Omega^1_{X/C})\,  c(\Omega^1_{X/C})
- 1/12 \int_X c_1(L)\, c_N(\Omega^1_{X/C})
+ \sum_{P \in \Sigma} w_{N,\delta_{P}},
\end{multline}
where $w_{N, \delta}$ is defined by \eqref{wdef}.
\end{theorem}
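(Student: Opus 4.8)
The plan is to treat the two equations separately: \eqref{sing hom hyp} is a localized Chern-class computation, while \eqref{eq: GK semi hom in X} follows the Hodge-theoretic strategy of \cite{Mordant22} supplemented by a base-change to semistable reduction.

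For \eqref{sing hom hyp}, I would first realize $\Sigma$ as the zero scheme of a section and then identify the local multiplicities with Milnor numbers. Let $s$ be a local defining section of $L = \cO_X(H)$; the relative differential $d_{X/C}s$ is a section of $\Omega^1_{X/C}\otimes L$, and its restriction to $H$ is a section of $(\Omega^1_{X/C}\otimes L)_{\mid H}$ whose zero locus is exactly $\Sigma$, since $P\in H$ is critical for $f$ precisely when the fibrewise differential vanishes, i.e.\ when $H$ is tangent to $X_{\pi(P)}$ at $P$. As $\Sigma$ is finite, this section is regular, so $[\Sigma]_{\mathrm{scheme}} = c_N\big((\Omega^1_{X/C}\otimes L)_{\mid H}\big)$ in $\CH_0(H)$. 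The heart of the matter is the local multiplicity at $P$: in fibre-adapted coordinates $(t,x_1,\dots,x_N)$, smoothness of $H$ together with $T_PH = T_PX_{\pi(P)}$ forces $s = ct + (\text{order}\ge 2)$ with $c\neq 0$ and pure-$x$ part of order $\delta := \delta_P$; solving $t=t(x)$ on $H$ (so $t(x)$ has order $\ge\delta$) gives $\partial_{x_i}s_{\mid H} = \partial_{x_i}g_\delta + (\text{order}\ge\delta)$, where $g_\delta$ is the degree-$\delta$ part defining the tangent cone. Smoothness of $\PP(C_PH_{\pi(P)})$ means the $\partial_{x_i}g_\delta$ vanish simultaneously only at the origin, hence form a regular sequence of forms of degree $\delta-1$, so the local length is $(\delta-1)^N$, the Milnor number of the homogeneous singularity. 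Summing and pushing forward via $i_*(i^*\alpha)=c_1(L)\cdot\alpha$, then expanding $c_N(\Omega^1_{X/C}\otimes L)=\sum_k c_{N-k}(\Omega^1_{X/C})\,c_1(L)^k$ against the degree-$(N+1)$ part of $(1-c_1(L))^{-1}c(\Omega^1_{X/C})$, yields \eqref{sing hom hyp}.

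For \eqref{eq: GK semi hom in X}, I would first compare the variation $\H^{N-1}(H_\eta/C_\eta)$ with the cohomology of the smooth family $X/C$. Using ampleness of $L$, the relative weak Lefschetz and Gysin theorems show that the ``non-vanishing'' part of $\H^{N-1}(H_\eta)$ is controlled by $X$, and that the only genuinely new cohomology of $H$ is its vanishing part. Computing the Griffiths height through the degrees of the Hodge bundles $R^qf_*\Omega^p_{H/C}$, expressed via the relative conormal sequence $0 \to L^{-1}_{\mid H} \to \Omega^1_{X/C\mid H} \to \Omega^1_{H/C}\to 0$ and Grothendieck--Riemann--Roch (with the Koszul factor $(1-c_1(L))^{-1}$ arising from resolving $\cO_H$ and summing the twists $\Omega^p_{X/C}(jH)$), produces the global part of the formula: the combination $\hgt_{GK}(\H^{N-1}(X/C)) - \hgt_{GK}(\H^{N}(X/C)) + \hgt_{GK}(\H^{N+1}(X/C))$ and the two Chern integrals $\tfrac1{12}\int_X(1-c_1(L))^{-1}c_1(\Omega^1_{X/C})c(\Omega^1_{X/C})$ and $-\tfrac1{12}\int_X c_1(L)\,c_N(\Omega^1_{X/C})$. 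The discrepancy between this geometrically computed quantity and the \emph{stable} height is a sum of purely local contributions $\lambda_{N,\delta_P}$, one per singular fibre, each depending only on the analytic type of the singularity.

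It then remains to prove $\lambda_{N,\delta}=w_{N,\delta}$, and this is the main obstacle and the point where the argument departs from the ordinary-double-point case of \cite{Mordant22}. Evaluating $\lambda_{N,\delta}$ directly would require the elementary exponents, i.e.\ the Hodge spectrum of the vanishing cohomology of a semihomogeneous singularity, which are not available in general. To circumvent this, I would construct a finite covering $C'\to C$, suitably ramified over $\Delta$, so that the base-changed pencil $H'=H\times_C C'$ admits a smooth model $\widetilde H'$ with semistable fibres having smooth components; such a model exists because the singularity is semihomogeneous, so a cyclic cover adapted to $\delta$ followed by a weighted blow-up and resolution produces it. On $\widetilde H'$ the limiting mixed Hodge structure and the associated height defect are explicitly controlled by the geometry of the semistable special fibre, so the local contribution over $C'$ is computable; tracking the stable Griffiths height through the finite base change and the resolution, and using its predictable (stable) behaviour under ramified pullback, then lets one solve for $\lambda_{N,\delta}$ and identify it with $w_{N,\delta}$. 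Assembling the global and local contributions gives \eqref{eq: GK semi hom in X}.
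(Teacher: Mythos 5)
Your overall strategy coincides with the paper's for both halves of the theorem. For \eqref{sing hom hyp} you reproduce the paper's argument: realize $\Sigma$ as the zero scheme of the regular section of $\Omega^1_{X/C\mid H}\otimes L_{\mid H}$ induced by the conormal map, identify the local length at $P$ with the Milnor number $(\delta_P-1)^N$ of a semihomogeneous singularity, and push forward. (One small caution: the step "$\partial_{x_i}s_{\mid H}=\partial_{x_i}g_\delta+(\text{order}\ge\delta)$, hence the length is that of the homogeneous case" is exactly the reduction of the semiquasihomogeneous to the quasihomogeneous Milnor number; it is true but not automatic, and the paper delegates it to \cite[Theorem p.~194]{AGZV85}.) For \eqref{eq: GK semi hom in X} you also follow the paper's route: weak Lefschetz to isolate the middle cohomology, the snc-degeneration formula for the alternating sum of Griffiths heights, and a ramified covering $C'\to C$ (with index $\delta_P$ over $\pi(P)$) so that a single blow-up of the preimage of $\Sigma$ yields a model $\widetilde H'$ with reduced snc fibres, circumventing the elementary exponents.

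There is, however, one concrete gap in the second half. Your plan is to compute the "geometric" quantity on the semistable model over $C'$ and then "track the stable Griffiths height through the finite base change," but you never say how the Chern-class integral $\int_{\widetilde H'}(c_1c_{N-1})(\omega^{1\vee}_{\widetilde H'/C'})$ produced by the snc formula on $\widetilde H'/C'$ is brought back down to $X$ so as to yield the two integrals appearing in \eqref{eq: GK semi hom in X}. This cannot be done by a naive projection formula: $H'$ is \emph{singular} at $\Sigma'$ (locally $u\,t'^{\delta}=F_\delta(x)+\cdots$), so $\widetilde H'$ is the blow-up of a singular variety and its relative log-tangent classes are not directly comparable to those of $H$. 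The paper's resolution of this is Proposition \ref{both omega iso}: there is a morphism $\upsilon:\widetilde H'\to\widetilde H$ (from the universal property of the blow-up, after checking $(\tau_{\mid H'}\circ\nu')^*(\Sigma)$ is the Cartier divisor $E'$) together with a canonical isomorphism $\upsilon^*\omega^1_{\widetilde H/C}\simeq\omega^1_{\widetilde H'/C'}$, proved by an explicit local-coordinate comparison of frames on the blow-up charts. Since $\upsilon$ is generically finite of degree $\deg(\sigma)$, the projection formula then converts the integral over $\widetilde H'$ into $\deg(\sigma)$ times an integral over $\widetilde H$, which the blow-up Chern-class computations (Propositions \ref{c omega on H} and \ref{class tangent hyp semihom}) express in terms of $X$, $L$ and the local terms. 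Without this comparison isomorphism --- which the paper itself singles out as the main new ingredient beyond the ordinary-double-point case --- your scheme of "solving for $\lambda_{N,\delta}$" does not close, because both sides of the equation you would solve live over $C'$ and neither has yet been expressed in terms of data on $X/C$.
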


In the right-hand sides of \eqref{sing hom hyp} and \eqref{eq: GK semi hom in X}, we denote by $c(\Omega^1_{X/C})$ the total Chern class of the vector bundle~$\Omega^1_{X/C}$.

This theorem extends our results  in \cite[6.1]{Mordant22}, where we consider  pencils of hypersurfaces whose only singularities are ordinary double points. Indeed  Proposition~6.1.1 in \emph{loc. cit.}, with  $\pi_{\mid \Sigma}$ assumed to be injective, is the variant concerning lower and upper  Griffiths heights of the special case of Theorem~\ref{GK semi hom in X} where all the $\delta_P$ are equal to 2.\footnote{The interested reader may actually check that the statement of   Theorem  \ref{GK semi hom in X} and its proof in the next sections remain valid when $\pi_{\mid \Sigma}$ is not assumed to be injective anymore, but when the multiplicity $\delta_P$ of a point $P$ in $\Sigma$ only depends on $\pi(P)$. This covers the variant of Proposition ~6.1.1 in \emph{loc. cit.} concerning stable Griffiths heights.}

\subsubsection{}\label{123} The proof of Theorem \ref{GK semi hom in X} will be completed at the end of Section \ref{geom hyp smooth pencil}, devoted to the geometry of the pencil of hypersurfaces $H$ in the smooth pencil $X$ over $C$.  It will rely on the auxiliary results from \cite{Mordant22} gathered in Section \ref{FromMordant22}, on the construction of a semistable model of $H$ over some finite covering $C'$ of the curve $C$ developed in Section \ref{ConstrSemstable}, and on the computations of characteristic classes of K\"ahler and logarithmic relative differentials in Section \ref{CharRelDiff}, which themselves are minor variations on computations in \cite{Mordant22}. 

Section \ref{Proof both omega} provides the proof of a technical result stated in Section \ref{CharRelDiff} (Proposition \ref{both omega iso}). It follows from standard arguments, and could have been left as an exercise for the reader. Considering the length of its derivation, we preferred to give some details. 

\subsubsection{} A major difference between the proof of Theorem \ref{GK semi hom in X} and the proof of its ``special case" Proposition 6.1.1 in \cite{Mordant22} is the introduction in Section  \ref{ConstrSemstable} of the covering $C'$ of $C$ such that, base changed to $C'$, the pencil of hypersurfaces $H$ over $C$ admits a model $\widetilde{H}'$ with semistable fibers with smooth components. This construction allows us to compute directly the stable Griffiths height~$\hgt_{GK, stab}\big(\H^{N-1}(H_\eta/C_\eta)\big)$ with no knowledge of the elementary exponents attached to the singular fibers of $H/C$ (compare \cite[2.2-5]{Mordant22}).

\subsection{Proof of Theorem \ref{pas intro GK hyp P(E) hom crit} from Theorem \ref{GK semi hom in X}}\label{Proof PE semihom} 

Before turning to the proof of Theorem \ref{GK semi hom in X}, we explain  how it implies Theorem \ref{pas intro GK hyp P(E) hom crit}.

We adopt the notation introduced in Theorem \ref{pas intro GK hyp P(E) hom crit}, and we introduce some further notation, similar to the one in \cite[6.2.1]{Mordant22}. 

\subsubsection{} The structure of the Chow groups of $\PP(E)$ (see \cite[Th. 3.3 (b)]{Fulton98}, with $k = \dim(\PP(E)) - 1$), implies that the line bundle $L := \cO_{\PP(E)}(H)$ can be written:
$$L \simeq \cO_E(d) \otimes \pi^* M,$$
where $M$ is some line bundle on $C.$

Moreover we define  classes in $\CH^1(C)$ and $\CH^1(\PP(E))$ by:
$$e := c_1(E), \quad m := c_1(M),  \quad \mbox{and} \quad h := c_1(\cO_E(1)).$$
With this notation, the height $\hgt_{int}(H/C)$ is given by (see \cite[(6.2.13)]{Mordant22}):
\begin{equation} \label{ht int semi hom} \hgt_{int}(H/C) = \int_C \big(m - d/(N+1) \; e \big).
\end{equation}

Moreover, according to  the definition of the Segre classes of $E$ and to their relation to the Chern classes of $E$ (see for instance \cite[3.1, 3.2]{Fulton98}), the following equalities hold in $\CH^0(C)$ and $\CH^1(C)$ respectively:
\begin{equation} \label{segre semihom} \pi_* h^N = [C] \quad \mbox{and} \quad  \pi_* h^{N+1} = -e.
\end{equation}

The proof of Theorem \ref{pas intro GK hyp P(E) hom crit} shall rely on the following generalization of \cite[Proposition 6.2.5]{Mordant22}.
\begin{proposition}
\label{cycles Proj E semihom} 
With the above notation, the following equalities hold in $\CH_0(\PP(E))$:
\begin{equation}\label{sigma in PE semihom}
\sum_{P \in \Sigma} (\delta_P-1)^N [P] = (d-1)^N h^N [(d-1) h + (N+1) \pi^\ast m - \pi^\ast e], \end{equation}
\begin{equation}\label{c1 cN Omega in PE semihom}
c_1(L) c_N(\Omega^1_{\PP(E)/C}) = (-1)^N h^N [d (N+1) h + (N+1) \pi^\ast m + d N \pi^\ast e], 
\end{equation}
and:
\begin{equation}\label{quotient in PE semihom}
    [(1 - c_1(L))^{-1} c_1(\Omega^1_{\PP(E)/C}) c(\Omega^1_{\PP(E)/C}) ]^{(N+1)} = h^N (a_{N,d} h + b_{N,d} \pi^\ast m + c_{N,d} \pi^\ast e), 
\end{equation}
where $a_{N,d}$, $b_{N,d}$ and $c_{N,d}$ are the integers defined by:
$$a_{N,d} := \frac{N+1}{d} \big (- (d-1)^{N+1} + (-1)^{N+1} \big ),$$
$$b_{N,d} := \frac{N+1}{d^2} \big (- (d-1)^N (d N + 1) + (-1)^N \big ),$$
and:
$$c_{N,d} := \frac{1}{d}  \big ( - (d-1)^N (d-N-2) + (-1)^{N+1} (N+2) \big ).  $$
\end{proposition}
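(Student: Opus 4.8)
The plan is to establish the three equalities in $\CH_0(\PP(E))$ by explicit computation in the Chow ring of the projective bundle, exploiting the fact that $\CH^\ast(\PP(E))$ is a free module over $\CH^\ast(C)$ with basis $1, h, \dots, h^N$, subject to the single relation encoding $c(E)$. Since $\CH_0(\PP(E))$ is generated by $h^N$ over $\CH_0(C)$ (via $\pi_\ast h^N = [C]$, $\pi_\ast h^{N+1} = -e$ from \eqref{segre semihom}), every degree-$0$ cycle can be written in the normal form $h^N(\alpha h + \beta \pi^\ast m + \gamma \pi^\ast e)$, and all three identities are assertions of this shape. The key inputs are the computation of $c(\Omega^1_{\PP(E)/C})$ and of $c_1(L)$ in terms of $h$, $e$ and $m$.

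First I would record the relative Euler sequence for $\pi \colon \PP(E) \to C$, namely
\begin{equation*}
0 \lra \Omega^1_{\PP(E)/C} \lra (\pi^\ast E)(-1) \lra \cO_{\PP(E)} \lra 0,
\end{equation*}
which yields $c(\Omega^1_{\PP(E)/C}) = c\big((\pi^\ast E)(-1)\big)$ and hence an explicit polynomial in $h$ with coefficients the Chern classes $c_i(E) = \pi^\ast c_i(E)$. To pass to the stated formulas, which involve only $e = c_1(E)$, I would use that in $\CH^\ast(\PP(E))$ any class of the form $\pi^\ast(c_i(E)) \cdot h^{j}$ with $j \leq N$ and $i \geq 1$ multiplies against the remaining factors so that only products landing in $\CH_0$ survive; because $\pi^\ast c_i(E)$ for $i \geq 2$ lives in $\CH^{\geq 2}(C) = 0$ on a curve, all Chern classes of $E$ beyond $c_1 = e$ vanish. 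This collapse is precisely why the answers depend only on $e$, and it is the mechanism that turns the binomial expansion of $(1 + h)^{?}$-type expressions into the compact integers $a_{N,d}, b_{N,d}, c_{N,d}$.

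With these reductions in hand, equality \eqref{sigma in PE semihom} follows by specializing the general Chow identity \eqref{sing hom hyp} of Theorem \ref{GK semi hom in X} to $X = \PP(E)$: I substitute $c(\Omega^1_{X/C})$ and $c_1(L) = d h + \pi^\ast m$ (from $L \simeq \cO_E(d) \otimes \pi^\ast M$) into $[(1 - c_1(L))^{-1} c(\Omega^1_{X/C})]^{(N+1)}$, expand the geometric series $(1 - c_1(L))^{-1} = \sum_{k \geq 0} c_1(L)^k$ truncated at the relevant degree, collect the coefficient of $h^N$ after reducing all $e$-powers of order $\geq 2$ to zero, and match against the normal form. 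Equalities \eqref{c1 cN Omega in PE semihom} and \eqref{quotient in PE semihom} are direct evaluations of the same two ingredients: for the former I extract $c_N$ from the Euler-sequence product and multiply by $c_1(L)$; for the latter I expand $(1 - c_1(L))^{-1} c_1(\Omega^1_{X/C}) c(\Omega^1_{X/C})$ and read off the degree-$(N+1)$ part. The verification that the resulting coefficients equal the displayed $a_{N,d}, b_{N,d}, c_{N,d}$ is a binomial bookkeeping exercise.

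The main obstacle I anticipate is purely combinatorial rather than conceptual: correctly tracking the binomial coefficients through the expansion of $(1 - dh - \pi^\ast m)^{-1}$ against $(1+h)^{N+1}$-type factors and isolating the coefficient of $h^N$ (respectively $h^{N+1}$, which feeds back via $\pi_\ast h^{N+1} = -e$), while consistently discarding every term containing $(\pi^\ast e)^2$ or $(\pi^\ast m)(\pi^\ast e)$ or higher. Keeping the $m$-linear and $e$-linear contributions separate, and ensuring the $1/d$ and $1/d^2$ denominators in $a_{N,d}, b_{N,d}, c_{N,d}$ emerge from the factor $c_1(L) = dh + \pi^\ast m$ rather than signalling an arithmetic slip, will require care; this is exactly the kind of computation that parallels \cite[Proposition 6.2.5]{Mordant22}, so I would mirror that derivation step by step, replacing the exponent $2$ there by the general multiplicity and the single degree-$d$ data accordingly.
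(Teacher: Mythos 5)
Your proposal follows essentially the same route as the paper: the paper's proof is precisely to redo the Chern-class computations of \cite[Proposition 6.2.5]{Mordant22} on $\PP(E)$ (relative Euler sequence, projective-bundle relations, vanishing of $c_i(E)$ for $i\geq 2$ over a curve), substituting the $0$-cycle $\sum_{P\in\Sigma}(\delta_P-1)^N[P]$ via the general identity \eqref{sing hom hyp} of Theorem \ref{GK semi hom in X} applied to $X=\PP(E)$. One small caveat: with the paper's (Fulton) convention the relative Euler sequence reads $0\to\Omega^1_{\PP(E)/C}\to \pi^\ast E^\vee\otimes\cO_E(-1)\to\cO_{\PP(E)}\to 0$, so the middle term involves $E^\vee$ rather than $E$ as you wrote --- otherwise the sign of $e$ in your expansions would come out wrong.
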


\begin{proof}
The proof follows from the  computations of Chern classes on $H$ and $\PP(E)$  in the proof of \cite[Proposition 6.2.5]{Mordant22}, by replacing the 0-cycle $i_* [\Sigma]$ in \emph{loc. cit.} by $\sum_{P \in \Sigma} (\delta_P-1)^N [P]$ and using the expression of this sum given by equality~\eqref{sing hom hyp}, applied to the hypersurface $H$ in the smooth pencil $X:=\PP(E)$ over $C$.
\end{proof}

\subsubsection{} As in the proof of \cite[Theorem 6.2.4]{Mordant22}, pushing forward equality \eqref{sigma in PE semihom} by the morphism~$\pi,$ then using equalities \eqref{segre semihom}, and finally taking the degree and using equality \eqref{ht int semi hom} establishes equality~\eqref{f(sigma) P(E) hom crit}.

Reasoning similarly with equalities \eqref{c1 cN Omega in PE semihom} and \eqref{quotient in PE semihom} yields the following equalities:
\begin{equation}\label{push c1 cN Omega in PE semihom}
\int_{\PP(E)} c_1(L) c_N(\Omega^1_{\PP(E)/C}) = (-1)^N (N+1)\, \hgt_{int}(H/C)
\end{equation}
and:
\begin{align}
\int_{\PP(E)} (1 - c_1(L))^{-1} c_1(\Omega^1_{\PP(E)/C}) c(\Omega^1_{\PP(E)/C})
& = \int_C (b_{N,d} m + (c_{N,d} - a_{N,d}) e) \nonumber \\
\label{push quotient in PE semihom}
& = (N+1)/d^2 \;  (- (N d + 1) (d-1)^N + (-1)^N)\,  \hgt_{int} (H/C).
\end{align}

Applying equality \eqref{eq: GK semi hom in X} from Theorem \ref{GK semi hom in X} to the hypersurface $H$ in the smooth pencil $X := \PP(E)$ over $C$ (indeed the line bundle $L := \cO_{\PP(E)}(H)$ is relatively ample over $C$), and using that the Griffiths height of the relative cohomology in any degree of $\PP(E)$ over $C$ vanishes, we obtain the following equality:
\begin{multline}
\hgt_{GK, stab}\big(\H^{N-1}(H_\eta / C_\eta)\big)
= 1/12\;  \int_{\PP(E)} (1 - c_1(L))^{-1} c_1(\Omega^1_{{\PP(E)}/C})\,  c(\Omega^1_{{\PP(E)}/C}) \\
- 1/12 \; \int_{\PP(E)} c_1(L)\, c_N(\Omega^1_{{\PP(E)}/C})
+ \sum_{P \in \Sigma} w_{N,\delta_{P}}.
\end{multline}
 
 Combining this equality with equalities \eqref{push c1 cN Omega in PE semihom} and \eqref{push quotient in PE semihom} yields the following equalities:
\begin{align*}
\hgt_{GK, stab}\big(\H^{N-1}(H_\eta / C_\eta)\big)
& = \bigg[\frac{N+1}{12 d^2} (- (N d + 1) (d-1)^N + (-1)^N) - \frac{(-1)^N}{12} (N+1)\bigg] \, \hgt_{int}(H/C) \\
 & \quad \quad \quad +  \sum_{P \in \Sigma} w_{N,\delta_{P}}\\
& = - (N+1) w_{N,d}\,  \hgt_{int}(H/C) +  \sum_{P \in \Sigma} w_{N,\delta_{P}}.
\end{align*}
This concludes the proof of Theorem \ref{pas intro GK hyp P(E) hom crit}.

\section{Some results from \cite{Mordant22}}\label{FromMordant22}

The remainder of this paper is devoted to the  proof of Theorem \ref{GK semi hom in X}. It will rely on various auxiliary results established in \cite{Mordant22}, which we recall in this section.

\subsection{Alternating sums of Griffiths heights} Firstly it will use the expression for the alternating sum of Griffiths heights associated to a pencil of projective varieties whose singular fibers are divisors with strict normal crossings established in \cite[Theorem 4.2.3]{Mordant22}.

For the convenience of the reader, we recall this expression.

Let $C$ be a connected smooth projective complex curve with generic point $\eta$, $Y$ be a connected smooth projective $N$-dimensional complex scheme, and let
$$
g : Y \lra C
$$
be a surjective morphism of complex schemes. Let us assume that there exists a finite subset $\Delta$ in $C$ such that $g$ is smooth over $C \setminus \Delta$, and such that the divisor $D:=Y_\Delta$ is a divisor with strict normal crossings in $Y$.

We denote by
$$\omega^1_{Y/C} = \Omega^1_{Y/C} (\log Y_\Delta)$$
the vector bundle over $Y$ of relative logarithmic differentials.

We write the divisor $D:= Y_\Delta$ as:
$$
D = \sum_{i \in I} m_i D_i,
$$
where $I$ is a finite set, for every $i$ in $I$, $m_i \geq  1$ is an integer, and $D_i$ is a smooth connected divisor, such that the $(D_i)_{i \in I}$ intersect each other transversally. 

The set $I$ may be written as the disjoint union:
$$I = \bigcup_{x \in \Delta} I_x,$$
where, for every $x \in \Delta,$ $I_x$ denotes the non-empty subset of $I$ defined by:
$$I_x := \big\{ i \in I \mid g(D_i) = \{x \} \big\}.$$

For every subset $J$ of $I$, let us denote:
$$
D_J := \bigcap_{i \in J} D_i,
$$
it is a smooth subscheme of codimension $|J|$.
As in  \cite[II, 3.4]{Deligne70},  for every integer $r \geq 1$, we denote by $D^r$ the subscheme of codimension $r$ of $Y$ defined by the union of all the intersections of~$r$ different components $(D_i)$:
$$
D^r := \bigcup_{J \subset I, |J| = r} D_J.
$$

Let us choose a total order $\preceq$ on $I$.
For every element $i$ in $I$, we define an open subset $\Dcirc_i$ of the divisor $D_i$ by:
$$\Dcirc_i := D_i \setminus D_i \cap D^2.$$
Similarly, for every pair $(i,j)$ in $I^2$ such that $i \prec j$, we define an open subset $\Dcirc_{ij}$ of the subscheme
$$D_{ij} := D_{\{i,j\}}$$
by:
$$\Dcirc_{ij} := D_{ij} \setminus D_{ij} \cap D^3.$$

Finally we denote by $\chi_{\mathrm{top}}$  the topological Euler characteristic.

\begin{theorem}[{\cite[Theorem 4.2.3]{Mordant22}}]
\label{GK in terms of classes in D non-reduced}
With the above notation,  we have: 
\begin{equation}\label{eq GK in terms of classes in D non-reduced}
    \sum_{n=0}^{2(N-1)} (-1)^{n-1} \hgt_{GK, -}\big(\H^n(Y_\eta/C_\eta)\big) = \frac{1}{12} \int_Y (c_1 c_{N-1})(\omega^{1 \vee}_{Y/C}) + \sum_{x \in \Delta} \alpha_x,
\end{equation}
where for every $x$ in $\Delta$, $\alpha_x$ is the rational number given by:
\begin{equation}
\alpha_x = \frac{N-1}{4} \sum_{i \in I_x} (m_i - 1) \chi_{\mathrm{top}}(\Dcirc_i) + \frac{1}{12} \sum_{\substack{(i,j) \in I_x^2, \\ i \prec j}} (3 - m_i/m_j - m_j/m_i) \, \chi_{\mathrm{top}}(\Dcirc_{ij}). \label{geommult}
\end{equation}
\end{theorem}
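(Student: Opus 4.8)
The plan is to reduce the alternating sum of lower Griffiths heights to an Euler-characteristic computation for the logarithmic Hodge bundles over $Y$, to evaluate the latter by Grothendieck--Riemann--Roch, and to separate a global characteristic-class term from local boundary contributions. First I would unwind the definition of $\hgt_{GK,-}$: it expresses the lower Griffiths height of $\H^n(Y_\eta/C_\eta)$ as a $\Z$-linear combination (with coefficient essentially $p$) of the degrees on $C$ of the lower canonical (Deligne) extensions of the Hodge graded pieces $\gr_F^p\,\H^n(Y_\eta/C_\eta)$. Reorganising $\sum_n(-1)^{n-1}\hgt_{GK,-}(\H^n)$ by Hodge degree $p$ and cohomological degree $q=n-p$, the essential input is that over $C\setminus\Delta$ one has $\gr_F^p\,\H^{p+q}\simeq R^q g_*\Omega^p_{Y/C}$, and that its lower canonical extension across $\Delta$ agrees with $R^q g_*\,\omega^p_{Y/C}$, where $\omega^p_{Y/C}:=\Exterior^p\omega^1_{Y/C}$, up to a degree defect supported on $\Delta$ that one computes locally. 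For a reduced strict normal crossings divisor this is the classical comparison of Steenbrink and Zucker; for a non-reduced $D$ it must be established directly, and it is this step that introduces the dependence on the multiplicities $m_i$.

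Next I would compute the global term. For each fixed $p$, the alternating sum $\sum_q(-1)^q\deg R^q g_*\,\omega^p_{Y/C}=\deg\det Rg_*\,\omega^p_{Y/C}$, which by Grothendieck--Riemann--Roch for $g\colon Y\lra C$ equals $\int_Y\big[\ch(\omega^p_{Y/C})\,\td(\omega^{1\vee}_{Y/C})\big]^{(N)}$, with $\omega^{1\vee}_{Y/C}$ in the role of the logarithmic relative tangent bundle. Summing over $p$ with the weights dictated by the definition of $\hgt_{GK,-}$ and using the generating-function identity for $\sum_p t^p\,\ch(\Exterior^p\omega^1_{Y/C})$ in terms of the Chern roots, together with the Todd series, the accumulated codimension-$N$ class collapses to $\tfrac{1}{12}\,(c_1 c_{N-1})(\omega^{1\vee}_{Y/C})$; the factor $1/12$ is the usual Todd coefficient. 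This is the higher-dimensional analogue of Mumford's formula for the Hodge line bundle.

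I would then assemble the local terms $\alpha_x$ from the degree defects found in the first step. Near a point $x\in\Delta$ the eigenvalues of the local monodromy around a component $D_i$ are the roots of unity $\exp(2\pi\sqrt{-1}\,k/m_i)$, and weighing the fractional parts of the corresponding residues produces the coefficient $(m_i-1)$ attached to the codimension-one strata $\Dcirc_i$ and the quadratic coefficient $3-m_i/m_j-m_j/m_i$ attached to the double crossings $\Dcirc_{ij}$; integrating these local densities along the strata yields the Euler characteristics $\chi_{\mathrm{top}}(\Dcirc_i)$ and $\chi_{\mathrm{top}}(\Dcirc_{ij})$, and the factors $\tfrac{N-1}{4}$ and $\tfrac1{12}$ come out of the same residue bookkeeping. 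One must also verify that the strata of codimension $\geq 3$ contribute nothing to the final total.

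The hard part will be this last, local, analysis at the non-reduced crossings: establishing the exact degree defect between the lower canonical extensions and the direct images of logarithmic differentials as an explicit function of the multiplicities, and matching the combinatorics of the residue weights on the strata with the quadratic expression $3-m_i/m_j-m_j/m_i$. This requires the full limit mixed Hodge structure of a normal crossings degeneration with multiplicities, together with a careful tracking of which monodromy-eigenvalue sectors enter the lower extension (this is exactly what the subscript ``$-$'' encodes). By contrast, the global Grothendieck--Riemann--Roch computation is a routine, if lengthy, generating-function manipulation.
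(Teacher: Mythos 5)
First, a point of order: this paper does not prove Theorem \ref{GK in terms of classes in D non-reduced} at all --- it is quoted verbatim from the companion work (Theorem 4.2.3 of the reference cited in the header) and used as a black box, so there is no internal proof to compare your attempt against. Judged on its own terms, your outline follows what is indeed the standard (and, as far as one can tell, the cited reference's) strategy: express $\hgt_{GK,-}$ through degrees of lower canonical extensions of the Hodge graded pieces, compare those extensions with the logarithmic Hodge bundles $R^qg_*\Exterior^p\omega^1_{Y/C}$, evaluate $\deg\det Rg_*$ by Grothendieck--Riemann--Roch with the Borel--Serre/Mumford generating-function identity producing the $\tfrac1{12}(c_1c_{N-1})(\omega^{1\vee}_{Y/C})$ term, and collect the remaining local defects into the $\alpha_x$.

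There is, however, a concrete flaw in your accounting of the local terms, beyond the fact that the decisive computations are only named rather than carried out. You attribute both coefficients in $\alpha_x$ to ``weighing the fractional parts of the residues'' of the local monodromy eigenvalues $\exp(2\pi\sqrt{-1}\,k/m_i)$. But set all $m_i=1$: the monodromy is then unipotent, all such residues vanish, and the lower, upper and canonical extensions coincide --- yet the double-crossing coefficient $3-m_i/m_j-m_j/m_i$ equals $1$, not $0$, and the theorem still carries the nonzero local term $\tfrac1{12}\chi_{\mathrm{top}}(\Dcirc_{ij})$ (this is exactly the reduced case \eqref{eq GK in terms of classes in D reduced semihom} that the present paper actually uses). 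So that contribution cannot originate in non-integral residues; it is already present in the semistable situation and must come out of the Steenbrink-type comparison and the GRR computation itself --- concretely, your claim that the codimension-$N$ class ``collapses to $\tfrac1{12}(c_1c_{N-1})$'' drops the $c_N$-type terms of the generating-function identity, whose integrals over $Y$ produce precisely such Euler characteristics of double loci. Only the excess of $3-m_i/m_j-m_j/m_i$ over $1$, and the term $\tfrac{N-1}{4}\sum_i(m_i-1)\chi_{\mathrm{top}}(\Dcirc_i)$ (which does vanish in the unipotent case), can be charged to the elementary exponents. As written, your residue bookkeeping would therefore miss the unipotent part of the double-point contribution, and the proof would not close.
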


In the proof of Theorem \ref{GK semi hom in X}, we shall only use the case of Theorem \ref{GK in terms of classes in D non-reduced} where the divisor $Y_{\Delta}$ is reduced, i.e. the multiplicities $m_i$ are all equal to 1. In this case equality \eqref{eq GK in terms of classes in D non-reduced} becomes the following equality (also given in \cite[(1.3.8)]{Mordant22}):
\begin{equation}\label{eq GK in terms of classes in D reduced semihom}
  \sum_{n=0}^{2(N-1)} (-1)^{n-1} \hgt_{GK}\big(\H^n(Y_\eta / C_\eta)\big)
= \frac{1}{12} \int_Y c_1(\omega_{Y/C}^{1\vee}) c_{N-1}(\omega_{Y/C}^{1\vee}) +  \frac{1}{12} \chi_{\mathrm{top}}(D^2 \setminus D^3).
\end{equation}

\subsection{A combinatorial formula}  The following combinatorial  formula will also be used in the proof of Theorem~\ref{GK semi hom in X}. 

\begin{proposition}[{\cite[Proposition 5.3.1]{Mordant22}}]
\label{formal identities semihom}
For every $a$ in $\C^\ast$, and every $n$ and $r$ in $\N$, the following equalities hold:
\begin{align}
\left[ \frac{(1 + y)^n}{1 + a y} \right]^{[n-r]} & = (-1)^{r} \sum_{k=r}^n \binom{n}{k} (-1)^k a^{k-r} = \frac{(-1)^{n+r}}{a^r} \Big [(a-1)^n - \sum_{k=0}^{r-1} \binom{n}{k} (-1)^{n-k} a^k \Big ],
\label{non square denominator semihom} 
\end{align}
where $f(y)^{[p]}$ denotes the coefficient of $y^p$ in some formal series $f(y) \in \C[[y]]$.
\end{proposition}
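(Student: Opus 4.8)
The plan is to prove both equalities by a direct power-series computation, since $\frac{(1+y)^n}{1+ay}$ is the product of two explicit elements of $\C[[y]]$. First I would expand the second factor as the geometric series $\frac{1}{1+ay} = \sum_{j \geq 0}(-1)^j a^j y^j$ (a legitimate identity in $\C[[y]]$ for any $a \in \C^\ast$) and the first factor by the binomial theorem, $(1+y)^n = \sum_{i=0}^n \binom{n}{i} y^i$. Extracting the coefficient of $y^{n-r}$ from the Cauchy product then amounts to summing $\binom{n}{i}(-1)^j a^j$ over the pairs $(i,j)$ with $i+j = n-r$, which gives
\[
\left[ \frac{(1+y)^n}{1+ay} \right]^{[n-r]} = \sum_{i=0}^{n-r} \binom{n}{i} (-1)^{n-r-i} a^{n-r-i}.
\]

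To reach the first claimed form I would substitute $k := n-i$, so that $i$ running from $0$ to $n-r$ corresponds to $k$ running from $r$ to $n$; using $\binom{n}{n-k} = \binom{n}{k}$ and $n-r-i = k-r$, the sum becomes $\sum_{k=r}^n \binom{n}{k}(-1)^{k-r} a^{k-r}$, and pulling out the sign $(-1)^{k-r} = (-1)^r(-1)^k$ yields the middle expression $(-1)^r \sum_{k=r}^n \binom{n}{k}(-1)^k a^{k-r}$.

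For the second equality I would factor $a^{-r}$ out of the middle expression and rewrite $(-1)^k = (-1)^n(-1)^{n-k}$, obtaining $\frac{(-1)^{n+r}}{a^r}\sum_{k=r}^n \binom{n}{k}(-1)^{n-k} a^k$. The key observation is then that $\sum_{k=r}^n \binom{n}{k}(-1)^{n-k} a^k$ is exactly the binomial expansion $(a-1)^n = \sum_{k=0}^n \binom{n}{k}(-1)^{n-k} a^k$ with its low-order terms $k = 0, \ldots, r-1$ removed, i.e. it equals $(a-1)^n - \sum_{k=0}^{r-1} \binom{n}{k}(-1)^{n-k} a^k$; substituting this gives precisely the right-hand side.

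There is no genuine obstacle here: the argument is pure bookkeeping with signs and one reindexing. The only point deserving a line of care is the degenerate range $r > n$, where the coefficient of $y^{n-r}$ vanishes because $n-r < 0$. The formulas remain correct in that case: the sum $\sum_{k=r}^n$ is empty, while on the right the truncation $\sum_{k=0}^{r-1}$ now captures all of $(a-1)^n$ (the extra terms with $k > n$ contributing nothing, as $\binom{n}{k}=0$), so that the bracketed quantity vanishes as well.
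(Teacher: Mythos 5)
Your proof is correct, and all the steps (geometric series expansion of $1/(1+ay)$, Cauchy product, the reindexing $k=n-i$, and the completion of $\sum_{k=r}^{n}\binom{n}{k}(-1)^{n-k}a^{k}$ to $(a-1)^{n}$) check out, including the degenerate case $r>n$. The paper itself gives no proof here --- it imports the identity verbatim from \cite[Proposition 5.3.1]{Mordant22} --- and your argument is the standard direct computation one would expect for it, so there is nothing further to compare.
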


\section{Construction of a semistable model of $H$}\label{ConstrSemstable}  

In this section, we introduce  an auxiliary construction which will allow us to compute the stable Griffiths height in the left-hand side of \eqref{eq: GK semi hom in X} as the   Griffiths height attached to some family of projective varieties with semistable reduction, hence with unipotent monodromy.

\subsection{A local description of the divisor $H$ in $X$}\label{local desc}

Let $P$ be a point in $\Sigma,$ 
and let $t$ be a local coordinate of $C$ near $\pi(P)$, namely a uniformizer of the discrete valuation ring $\cO_{C, \pi(P)}$. 

The pullback $\pi^* t$ can be completed into a local coordinate system $(\pi^* t, x_1,\dots,x_N)$ of $X$ near~$P$.  In other words, $x_1, \dots, x_N$ belong to the local ring $\cO_{X,P}$ and  $(\pi^* t, x_1,\dots,x_N)$ generate its maximal ideal $\mathfrak{m}_{X,P}$. Since $P$ belongs to $\Sigma$, the hyperplanes $T_{P}X_{\pi(P)}$ and $T_{P} H$ in $T_{P} X$ coincide, and therefore the restrictions $(x_{1 | H},\dots,x_{N | H})$ induce a local coordinate system on $H$ near $P$.

Let $F_{\delta_P} (T_1, \dots, T_N)$  be an equation in $\C[T_1, \dots, T_N]$ of the projective tangent cone $\PP(C_P H_{\pi(P)})$ seen as a projective hypersurface in $\PP(T_P H)$ that we identify with $\PP^{N-1}_\C$ through the (differential at~$P$ of the) coordinate system $(x_{1 | H},\dots,x_{N | H})$. Since by hypothesis $\PP(C_P H_{\pi(P)})$  is a smooth hypersurface of degree $\delta_P$, the homogeneous  polynomial $F_{\delta_P}$ has degree $\delta_P$, and has non-zero discriminant. 

\begin{proposition}\label{Local H}
After possibly replacing $F_{\delta_P}$ by $\lambda F_{\delta_P}$ for some $\lambda$ in~$\C^\ast$, the germ at $P$ of the divisor $H$ in $X$ is 
 defined by an equation of the form:
\begin{equation} \label{eq H in X F delta local}
 \pi^* t = F_{\delta_P}(x_1,\dots,x_N) + F_{> \delta_P},
 \end{equation}
where $F_{>\delta_P}$ belongs to the ideal $\mathfrak{m}_{X,P}^{\delta_P +1}$ of $\cO_{X,P}$.
\end{proposition}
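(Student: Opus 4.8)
The plan is to produce the normalised local equation \eqref{eq H in X F delta local} in three moves: write down \emph{any} local equation of the smooth divisor $H$, use the hypothesis $P \in \Sigma$ to put it in the shape $\pi^*t - Q$ with $Q$ of order $\geq 2$, solve for $\pi^*t$ as a graph over the $x_i$, and finally read off the leading term from the tangent cone.

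\textbf{Step 1 (a first local equation).} Since $H$ is a non-singular hypersurface and $P \in H$, near $P$ the germ of $H$ is cut out by a single $G \in \mathfrak{m}_{X,P}$ with $dG_P \neq 0$, so that $T_P H = \ker dG_P$. As $t$ is a uniformizer of $\cO_{C,\pi(P)}$, the germ of the fiber $X_{\pi(P)}$ is cut out by $\pi^* t$, whence $T_P X_{\pi(P)} = \ker d(\pi^* t)_P$. The condition $P \in \Sigma$ gives (as recalled in \ref{local desc}) the equality $T_P H = T_P X_{\pi(P)}$, so $dG_P$ and $d(\pi^* t)_P$ are two non-zero covectors with the same kernel, hence proportional. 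After rescaling $G$ by a non-zero scalar I may assume $dG_P = d(\pi^* t)_P$, and then $G = \pi^* t - Q$ with $Q \in \mathfrak{m}_{X,P}^2$.

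\textbf{Step 2 (solving for $\pi^* t$).} Writing $s := \pi^* t$ and regarding $(s, x_1, \dots, x_N)$ as analytic coordinates centered at $P$, the partial derivative $\partial(s - Q)/\partial s$ equals $1$ at $P$ because $Q \in \mathfrak{m}_{X,P}^2$ has vanishing first derivatives. The holomorphic implicit function theorem then produces a unique holomorphic germ $\psi(x_1, \dots, x_N)$ with $\psi(0) = 0$ and $\psi = Q(\psi, x)$, so that the germ of $H$ at $P$ is the graph $\{\pi^* t = \psi(x_1, \dots, x_N)\}$, and $\pi^* t - \psi$ is a local equation involving $x_1, \dots, x_N$ only on the right.

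\textbf{Step 3 (identifying the leading term).} Decomposing $Q = Q_0(x) + s\, R(s,x)$ with $Q_0 = Q|_{s=0}$ and $R \in \mathfrak{m}_{X,P}$, the fixed-point relation $\psi = Q_0 + \psi\, R(\psi, x)$ forces, by comparison of orders of vanishing, $\psi$ and $Q_0$ to have the same order and the same leading homogeneous part, since the correction $\psi\, R(\psi,x)$ has order strictly larger than that of $\psi$. On the other hand $G|_{s=0} = -Q_0$ is a local equation of $H_{\pi(P)}$ in $X_{\pi(P)}$ in the coordinates $x_{1|H}, \dots, x_{N|H}$, so the order of $Q_0$ is the multiplicity $\delta_P$ and its degree-$\delta_P$ part defines $\PP(C_P H_{\pi(P)})$; being a homogeneous form of degree $\delta_P$ defining the same projective tangent cone as $F_{\delta_P}$ in the same coordinates, it equals $\mu F_{\delta_P}$ for some $\mu \in \C^\ast$. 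Replacing $F_{\delta_P}$ by $\mu F_{\delta_P}$ (which alters neither the tangent cone nor the standing hypotheses on $F_{\delta_P}$) and setting $F_{>\delta_P} := \psi - \mu F_{\delta_P} \in \mathfrak{m}_{X,P}^{\delta_P + 1}$ yields \eqref{eq H in X F delta local}.

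\textbf{Main obstacle.} The only delicate point is Step 3, where the leading term of the \emph{solved} graph function $\psi$ must be matched with the tangent cone, which is defined through the restriction $G|_{s=0}$ to the special fiber. What makes this routine rather than subtle is the observation that the part of $Q$ divisible by $\pi^* t$ contributes to $\psi$ only in order $> \delta_P$, so that solving for $\pi^* t$ leaves the lowest-degree homogeneous part of $Q|_{s=0}$ undisturbed; the remainder of the argument is bookkeeping with the implicit function theorem and orders of vanishing.
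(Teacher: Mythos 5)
Your proof is correct, but it routes through the ambient space $X$ where the paper's proof works directly on $H$. The paper exploits the observation, already recorded in \ref{local desc}, that $P\in\Sigma$ forces $T_PH=T_PX_{\pi(P)}$, so that the restrictions $(x_{1|H},\dots,x_{N|H})$ form a coordinate system on $H$ near $P$; the element $\pi_{|H}^\ast t\in\cO_{H,P}$ is then \emph{literally} a local equation of $H_{\pi(P)}$ in $H$, so by the very definition of multiplicity and of the projective tangent cone it equals $\lambda F_{\delta_P}(x_{1|H},\dots,x_{N|H})$ plus an element of $\mathfrak{m}_{H,P}^{\delta_P+1}$; one then lifts the remainder to $\mathfrak{m}_{X,P}^{\delta_P+1}$ and notes that the resulting equation cuts out a smooth germ containing, hence equal to, the germ of $H$. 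Your Steps 2--3 (implicit function theorem plus the order comparison on the fixed-point relation $\psi=Q_0+\psi R(\psi,x)$) are exactly the work that this restriction-to-$H$ trick makes automatic: your $\psi$ restricted to $H$ is $\pi_{|H}^\ast t$, and your argument that the part of $Q$ divisible by $\pi^\ast t$ only perturbs $\psi$ in order $>\delta_P$ is a correct, if longer, substitute. Two minor points: your appeal to the holomorphic implicit function theorem produces $\psi$ (hence $F_{>\delta_P}$) a priori only in the analytic local ring rather than in $\cO_{X,P}$ itself --- harmless for everything the proposition is used for, and avoidable by arguing as the paper does via the surjection $\cO_{X,P}\to\cO_{H,P}$; and you should note (as you implicitly do) that $Q_0\neq 0$, which follows from $\delta_P<\infty$, so the order comparison is not vacuous. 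What your approach buys is independence from the prior observation that the $x_{i|H}$ are coordinates on $H$; what it costs is the extra bookkeeping of Step~3.
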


\begin{proof}
The divisor $H_{\pi(P)}$ in $H$ is defined by the equation $(\pi_{| H}^* t = 0)$. 
Therefore there exist $\lambda \in \C^\ast$ and ${G}_{> {\delta_P}} \in \mathfrak{m}_{H,P}^{{\delta_P} +1}$ such that the following 
equality holds in $\cO_{H,P}$:
$$\pi_{\mid H}^\ast t =\lambda F_{\delta_P} (x_{1 | H},\dots,x_{N | H}) + G_{> \delta_P}.$$
We may assume $\lambda = 1$ and  choose  an element $F_{>{\delta_P}}$ of $\mathfrak{m}_{X,P}^{{\delta_P} +1}$ whose restriction to (the germ at $P$ of) $H$ is ${G}_{> {\delta_P}}$. Then the equation \eqref{eq H in X F delta local} defines a germ of smooth hypersurface in $X$ which contains, hence coincides with, the germ of $H$ at $P$.
 \end{proof}

\subsection{Construction of the blow-ups $\widetilde{H}$ and $\widetilde{H}'$}
\subsubsection{}\label{641} Let $C'$ be a connected smooth projective complex curve, and let
$$\sigma : C' \lra C$$ be a finite morphism such that, for every point $x'$ in the finite set
$$\Delta' := \sigma^{-1}(\Delta),$$
the index of ramification of $\sigma$ at $x'$  is precisely $\delta_{P_{\sigma(x')}}$. Such a finite covering $C'$ of $C$ is easily constructed, for instance as a cyclic covering.

Consider the  fiber product of $X$ and $C'$ over $C$:
$$ X' := X \times_C C',$$
and denote by: 
$$\pi': X' \lra C' \quad \mbox{and} \quad \tau: X' \lra X$$
the two projections.

Moreover let us denote the set-theoretic inverse images of $H$ and $\Sigma$ by $\tau$ by:
$$H' := \tau^{-1} (H) \quad \mbox{and} \quad \Sigma':= \tau^{-1} (\Sigma),$$
which we shall also see as reduced\footnote{The divisor $H'$ coincides with the scheme-theoretic inverse image $\tau^\ast (H)$. However, if $\Sigma$ is not empty, then its scheme-theoretic inverse image by $\tau$ is not reduced.} subschemes of the smooth scheme $X'$.

Let:
$$\chi' : \widetilde{X}' \lra X'  \quad \mbox{(resp. $\chi : \widetilde{X} \lra X$)}$$
be the blow-up of  $\Sigma'$ (resp. $\Sigma$) seen as a reduced subscheme in  $X'$ (resp. in $X$), and let:
$$ Z' := \chi'^{-1}(\Sigma') \quad \mbox{(resp. $Z:= \chi^{-1}(\Sigma)$)}$$
be the exceptional divisor of $\chi'$ (resp. of $\chi$).  Its connected components are the inverse images:
$$Z'_{P'}:= \chi'^{-1}(P'), \quad P' \in \Sigma' \quad \mbox{(resp. $Z_{P}:= \chi^{-1}(P), \quad P \in \Sigma$)}.$$ 
Finally let  $\widetilde{H}'$ (resp. $\widetilde{H}$) be the strict transform of $H'$ (resp. $H$) in $\widetilde{X}'$ (resp. in $\widetilde{X}$).

The following morphism:
$$\nu' := \chi'_{| \widetilde{H}'} : \widetilde{H}' \lra  H'   \quad \mbox{(resp. $\nu:= \chi_{| \widetilde{H}} : \widetilde{H} \lra  H$)}$$
 can be identified with the blow-up of $\Sigma'$ (resp. $\Sigma$) in the complex scheme $H'$ (resp. in the smooth complex scheme $H$).
 
\subsubsection{}\label{642} These constructions are summarized by the following diagram where $Z'_\delta$ is the Cartier divisor in $\widetilde{X}'$ defined as:
 $$Z'_\delta := \sum_{P' \in \Sigma'} \delta_{\tau(P')}  Z'_{P'},$$
and where we identify the effective Cartier divisors $\widetilde{H}' + Z'_{\delta}$ and $ \widetilde{H} + Z$ to the codimension one subschemes they define:
$$\xymatrix{
& & Z' \ar[ld] \ar[ddd] & & & Z \ar[ld] \ar[ddd] \\
& \widetilde{H}' + Z'_{\delta} \ar[ld] \ar[ddd] & & & \widetilde{H} + Z \ar[ld] \ar[ddd] & \\
\widetilde{X}' \ar[ddd]_{\chi'} & & & \widetilde{X} \ar[ddd]^{\chi} & & \\
& & \Sigma' \ar[ld] \ar@{.>}[rrr] & & & \Sigma \ar[ld] \\
& H' \ar[ld] \ar[rrr]|(.655){\hole} & & & H \ar[ld] & \\
X' \ar[d]_{\pi'} \ar[rrr]^{\tau} & & & X \ar[d]^{\pi} & & \\
C' \ar[rrr]_{\sigma} & & & C & &}
$$
 
 Observe that, in this diagram, all rectangles with non-dotted arrows are cartesian.  
 
The divisor $\widetilde{H}'$ in $\widetilde{X}'$ is non-singular and intersects transversally the divisor $Z' = \sqcup_{P' \in \Sigma'} Z'_{P'}.$
Moreover the following equality of divisors in $\widetilde{X}'$ holds:
\begin{equation} \label{divisor H' in tilde X'}
\chi'^\ast (H') = \widetilde{H}' +  Z'_\delta.
\end{equation}

Similarly the divisor $\widetilde{H}$ in $\widetilde{X}$ is non-singular and intersects transversally the divisor $Z= \sqcup_{P \in \Sigma} Z_{P},$ and the following equality of divisors in $\widetilde{X}$ holds:
\begin{equation}\label{divisor H in tilde X}
\chi^*(H) = \widetilde{H} + Z.
\end{equation}

Moreover we denote by $E'_{P'}$ (resp. $E_P$) the intersection $Z'_{P'} \cap \widetilde{H}'$ (resp. $Z_P \cap \widetilde{H}$) for every point~$P'$ in $\Sigma'$ (resp. $P$ in $\Sigma$), by $H'_{\Delta'}$ and $\widetilde{H}'_{\Delta'}$ (resp. by $H_\Delta$ and $\widetilde{H}_\Delta$)  the inverse images of the reduced divisor $\Delta'$ in $C'$ (resp. of the reduced divisor $\Delta$ in $C$) by $\pi'_{| H'}$ and $\pi'_{| H'} \circ \nu'$ respectively (resp. by~$\pi_{| H}$ and $\pi_{| H} \circ \nu$), and by $\widetilde{H'_{\Delta'}}$ (resp. by $\widetilde{H_\Delta}$) the strict transform in $\widetilde{H}'$ (resp. in $\widetilde{H}$) of the divisor~$H'_{\Delta'}$ (resp. $H_\Delta$).

The divisor $\widetilde{H'_{\Delta'}}$ is non-singular and intersects transversally the non-singular divisor $E' := \sqcup_{P' \in \Sigma'} E'_{P'},$ and the following equality of divisors in $\widetilde{H}'$ holds:
\begin{equation} \label{sing fibers semihom '}
\widetilde{H}'_{\Delta'} = \widetilde{H'_{\Delta'}} + \sum_{P' \in \Sigma'} E'_{P'}.
\end{equation}

Similarly the divisor $\widetilde{H_{\Delta}}$ is non-singular and intersects transversally the non-singular divisor $E := \sqcup_{P \in \Sigma} E_{P},$ and the following equality of divisors in $\widetilde{H}$ holds:
\begin{equation}\label{sing fibers semihom}
\widetilde{H}_{\Delta} = \widetilde{H_{\Delta}} + \sum_{P \in \Sigma}  \delta_{P} E_{P}.
\end{equation}

In particular, the divisor $\widetilde{H}'_{\Delta'}$ in $\widetilde{H}'$ is a reduced divisor with strict normal crossings, and the divisor $\widetilde{H}_{\Delta}$ in $\widetilde{H}$ is a divisor with strict normal crossings, non-reduced when $\Sigma$ is not empty. This allows us to consider the vector bundles of relative logarithmic differentials $\omega^1_{\widetilde{H}'/C'}$ on $\widetilde{H}'$ and $\omega^1_{\widetilde{H}/C}$ on~$\widetilde{H}$.

\subsubsection{Computations of topological Euler characteristics}
\begin{proposition}\label{alpha_x on tilde(H)' semi-hom}
For every point $P'$ in $\Sigma'$ with image $P := \tau(P')$ in $\Sigma,$ the following equality of integers holds:
\begin{equation}\label{eq chi 2-codim strata semihom}
\chi_{\mathrm{top}}(E'_{P'} \cap \widetilde{H'_{\Delta'}} ) = (-1)^N / \delta_P  \;[(\delta_P-1)^N + (-1)^N (N \delta_P - 1)],
\end{equation}
where $\chi_{\mathrm{top}}$ denotes the topological Euler characteristic.
\end{proposition}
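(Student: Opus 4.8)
The plan is to reduce the computation of $\chi_{\mathrm{top}}(E'_{P'}\cap\widetilde{H'_{\Delta'}})$ to the Euler characteristic of an explicit smooth projective hypersurface, by reading off the two relevant projectivized tangent cones from the local equation of $H'$ near $P'$. Write $\delta:=\delta_P$. Let $t$ be a local coordinate of $C$ at $\pi(P)$ and $s$ a local coordinate of $C'$ at $x':=\pi'(P')$; since the ramification index of $\sigma$ at $x'$ equals $\delta$ by the choice made in \ref{641}, I may assume $\sigma^\ast t = s^\delta$. Completing $\pi^\ast t$ into a coordinate system $(\pi^\ast t,x_1,\dots,x_N)$ of $X$ at $P$ as in \ref{local desc}, Proposition \ref{Local H} gives the local equation $\pi^\ast t = F_{\delta}(x)+F_{>\delta}$ of $H$, with $F_\delta$ of nonzero discriminant and $F_{>\delta}\in\mathfrak{m}_{X,P}^{\delta+1}$. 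Pulling back by $\tau$ and using $\tau^\ast\pi^\ast t = s^\delta$, I obtain the local equation of $H'$ at $P'$ in the coordinates $(s,x_1,\dots,x_N)$ of $X'$:
\begin{equation*}
s^\delta - F_\delta(x) - R(s,x) = 0 .
\end{equation*}
The key observation is that $\mathrm{ord}_{P'}R\geq\delta+1$: indeed $\mathfrak{m}_{X,P}^{\delta+1}$ is generated by monomials $t^ax^b$ with $a+|b|\geq\delta+1$, and $\tau^\ast(t^ax^b)=s^{\delta a}x^b$ has order $\delta a+|b|\geq a+|b|\geq\delta+1$.

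Next I would pass to the blow-up $\chi'$ of $P'$, so that $Z'_{P'}=\PP(T_{P'}X')\simeq\PP^N$ with homogeneous coordinates $[s:x_1:\dots:x_N]$, and the strict transform of any subvariety through $P'$ meets $Z'_{P'}$ in its projectivized tangent cone. Since $\mathrm{ord}_{P'}R\geq\delta+1$, the initial form of the equation above is $s^\delta-F_\delta(x)$, whence
\begin{equation*}
E'_{P'}=\widetilde{H}'\cap Z'_{P'}=\{\,s^\delta = F_\delta(x)\,\}\subset\PP^N .
\end{equation*}
On the other hand $H'_{\Delta'}=(\pi'_{| H'})^{-1}(\Delta')$ is cut out near $P'$ by $s=0$ inside $H'$; restricting the equation to $s=0$ presents it as the hypersurface $\{F_\delta(x)+R(0,x)=0\}$ of the smooth $N$-fold $\{s=0\}$, and as $\mathrm{ord}\,R(0,x)\geq\delta+1$ its tangent cone at $P'$ is $\{F_\delta(x)=0\}$. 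Since $\widetilde{H'_{\Delta'}}\subset\widetilde{H}'$, this gives
\begin{equation*}
E'_{P'}\cap\widetilde{H'_{\Delta'}}=\widetilde{H'_{\Delta'}}\cap Z'_{P'}=\{\,s=0,\ F_\delta(x)=0\,\}=\{F_\delta=0\}\subset\PP^{N-1},
\end{equation*}
the last $\PP^{N-1}$ being the hyperplane $\{s=0\}$. Using that $F_\delta$ has nonzero discriminant — so that, by the Euler relation, its partials have no common projective zero — I would check that both $E'_{P'}$ and $V:=\{F_\delta=0\}\subset\PP^{N-1}$ are smooth; this recovers the transversality recorded in \ref{642} and identifies the stratum in question with the smooth degree-$\delta$ hypersurface $V$ of dimension $N-2$.

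It then remains to compute $\chi_{\mathrm{top}}(V)$. From the normal bundle sequence, $c(T_V)=(1+h)^N/(1+\delta h)$ with $h=c_1(\cO(1))_{| V}$ and $\int_V h^{N-2}=\delta$, so that
\begin{equation*}
\chi_{\mathrm{top}}(V)=\delta\,[(1+h)^N/(1+\delta h)]^{[N-2]}.
\end{equation*}
Applying Proposition \ref{formal identities semihom} with $(n,a,r)=(N,\delta,2)$ turns the bracket into $(-1)^N\delta^{-2}[(\delta-1)^N+(-1)^N(N\delta-1)]$, and multiplying by $\delta$ yields exactly the claimed value $(-1)^N\delta^{-1}[(\delta-1)^N+(-1)^N(N\delta-1)]$.

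The only genuinely delicate points I expect are the two tangent-cone identifications: namely, justifying that the higher-order remainder $R=\tau^\ast F_{>\delta}$ affects neither the initial ideal of $H'$ (so that $E'_{P'}$ is the projectivized affine cone $\{s^\delta=F_\delta(x)\}$) nor the initial ideal of $H'_{\Delta'}$ (so that its tangent cone is $\{F_\delta=0\}$ inside the hyperplane $\{s=0\}$), together with the smoothness of $E'_{P'}$ and $V$. Once the order estimate $\mathrm{ord}_{P'}R\geq\delta+1$ is established, these are straightforward, and the closing Euler-characteristic computation is a routine application of Proposition \ref{formal identities semihom}.
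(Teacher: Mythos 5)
Your argument is correct and follows essentially the same route as the paper: both identify $E'_{P'}\cap\widetilde{H'_{\Delta'}}$ with a smooth degree-$\delta_P$ hypersurface in $\PP^{N-1}$ (the paper phrases this intrinsically as the intersection of the projective tangent cone $\PP(C_{P'}H')$ with the hyperplane $\PP(T_{P'}X'_{\Delta'})$, while you read it off from the explicit local equation $s^{\delta}=F_{\delta}(x)+R$), and both then compute $\chi_{\mathrm{top}}$ via $c(T_V)=(1+h)^N/(1+\delta_P h)$ and the same application of Proposition \ref{formal identities semihom} with $n=N$, $r=2$, $a=\delta_P$. The order estimate $\mathrm{ord}_{P'}\tau^{\ast}F_{>\delta}\geq\delta+1$ that you flag as the delicate point is exactly what the paper's appeal to Proposition \ref{Local H} encodes, so there is no gap.
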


\begin{proof}
Consider $P'$ and $P$ as above. As, over some open neighborhood of $P'$, the complex scheme~$\widetilde{X}'$ is the blow-up at $P'$ of the smooth $(N+1)$-dimensional complex scheme $X'$, there is a canonical  isomorphism:
$$\psi : Z'_{P'} \lrasim \PP(T_{P'} X').$$

Under this isomorphism, the divisor $E'_{P'} = Z'_{P'} \cap \widetilde{H}'$ in $Z'_{P'}$ is mapped precisely onto the projective tangent cone $\PP(C_{P'} H')$ at $P'$ of the  hypersurface $H'$ in $X'$, of which $P'$ is an isolated singular point. By using Proposition \ref{Local H}, this projective tangent cone  is easily seen to be smooth of degree $\delta_P.$

Moreover, under the isomorphism $\psi$, the subscheme $E'_{P'} \cap \widetilde{H'_{\Delta'}}$ of $Z'_{P'}$  is mapped onto the projective tangent cone $\PP(C_{P'} H'_{\Delta'})$  at $P'$ of the subscheme $H'_{\Delta'} = H' \cap X'_{\Delta'}$ of $X'$, or equivalently onto the intersection of the projective tangent cone $\PP(C_{P'} H')$ and the projective tangent space $\PP(T_{P'} X'_{\Delta'})$, which is well-defined because $\pi'$ is smooth.

If we let: 
$$h := c_1(\cO_{T_{P'}X'_{\Delta'}} (1)) \quad \big(\in \CH^1(\PP(T_{P'}X'_{\Delta'}))\big),$$
and:
$$I := \PP(C_{P'} H') \cap \PP(T_{P'} X'_{\Delta'}) = \psi(E'_{P'} \cap \widetilde{H'_{\Delta'}}),$$
which is a smooth hypersurface of degree $\delta_P$ in the projective space $\PP(T_{P'}X'_{\Delta'})$,  a standard computation of Chern classes yields the following equalities, where we have denoted by $[V]$ the class in the Grothendieck group~$K^0(I)$ of some vector bundle $V$ on $I$: 
\begin{align*}
\chi_{\mathrm{top}}(E'_{P'} \cap \widetilde{H'_{\Delta'}}) & = \int_{E'_{P'} \cap \widetilde{H'_{\Delta'}}} c_{N-2} (T_{E'_{P'} \cap \widetilde{H'_{\Delta'}}}) \\
& = \int_{I} c_{N-2} (T_{I}) \\
& = \int_{I} c_{N-2} \big([T_{\PP(T_{P'} X'_{\Delta'}) \mid I }] -[N_{I} \PP(T_{P'} X'_{\Delta'})]\big)\\
& = \int_{I} c_{N-2} \big( [ T_{P'} X'_{\Delta'} \otimes \cO_{T_{P'} X'_{\Delta'}} (1)_{\mid I}] - [\cO_{I}] - [\cO_{T_{P'}X'_{\Delta'}} (\delta_P)_{\mid I}] \big)\\
& =  \int_{I} (1+ h)^N (1 + \delta_P h)^{-1} \\
& =  \delta_P \int_{\PP(T_{P'} X'_{\Delta'})} h (1+ h)^N (1 + \delta_P h)^{-1}  
\end{align*}

Finally, using  equality \eqref{non square denominator semihom} applied to $n := N,$  $r := 2$ and $a := \delta_P$, we get:
\begin{equation}\label{chi of exceptional divisor '}
\chi_{\mathrm{top}}(E'_{P'}\cap \widetilde{H'_{\Delta'}}) =   \delta_P \, \bigg[\frac{(1+y)^{N}}{1+\delta_P y} \bigg]^{[N-2]}
=(-1)^N / \delta_P  \; [(\delta_P-1)^N + (-1)^N (N \delta_P - 1)]. 
\end{equation}
This concludes the proof of Proposition \ref{alpha_x on tilde(H)' semi-hom}. 
See also \cite[Proposition 5.2.1]{Mordant22} for a similar computation.
\end{proof}

\section{Characteristic classes of K\"ahler and logarithmic relative differentials on the hypersurfaces $H$, $\widetilde{H}$, and $\widetilde{H}'$}\label{CharRelDiff}

\subsection{Comparing $\omega^1_{\widetilde{H}/C}$ and $\omega^1_{\widetilde{H}'/C'}$}

The following proposition is arguably 
the point where the proof of Theorem \ref{GK semi hom in X} differs the most significantly from the proof of  \cite[Proposition~6.1.1]{Mordant22}.  This result, concerning the geometry of the morphisms: 
$$\pi_{\mid H} \circ \nu : \widetilde{H} \lra C$$
and: 
$$\pi'_{\mid H'} \circ \nu' : \widetilde{H}' \lra C', $$
will allow us to compute the stable Griffiths height $\hgt_{GK, stab}\big (\H^{N-1}(H_\eta / C_\eta)\big )$ without controlling the elementary exponents of the degeneration $\pi_{\mid H}: H \ra C.$

\begin{proposition}\label{both omega iso} 
With the notation introduced in \ref{641} and \ref{642}, the inverse image $(\tau_{| H'} \circ \nu')^{\ast}(\Sigma),$ seen as a subscheme of $\widetilde{H}'$, is precisely the exceptional divisor $E' = \sqcup_{P'\in \Sigma'} E'_{P'}$. In particular, it is a (Cartier) divisor in~$\widetilde{H}'$, and consequently 
there
exists a unique morphism $\upsilon : \widetilde{H}' \ra  \widetilde{H}$ such that the following diagram is commutative:
\begin{equation}\label{diag ups}
\xymatrix{
\widetilde{H}' \ar[r]^{\upsilon} \ar[d]_{\nu'} & \widetilde{H}\ar[d]^{\nu} \\
H' \ar[r]_{\tau_{| H'}} & H .
}
\end{equation}

If we denote by:
$$\phi : \tau_{| H'}^* \Omega^1_{H/C} \lrasim \Omega^1_{H'/C'}$$ 
the isomorphism of coherent sheaves on $H'$ induced by the cartesian diagram:
$$
\xymatrix{
H' \ar[r]^{\tau_{| H'}} \ar[d]_{\pi'_{| H'}} & H \ar[d]^{\pi_{| H}} \\
C' \ar[r]_{\sigma} & C,
}
$$
then the isomorphism of locally free sheaves on $\widetilde{H}' \setminus \nu'^{-1}(\Sigma') \simeq H' \setminus \Sigma'$ defined by the restriction of~$\phi$ extends to an isomorphism of locally free sheaves on $\widetilde{H}'$:
$$ \widetilde{\phi} : \upsilon^* \omega^1_{\widetilde{H}/C} \lrasim \omega^1_{\widetilde{H}'/C'}.$$
\end{proposition}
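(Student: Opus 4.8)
The plan is to dispatch the first two assertions quickly, from the local model of Proposition~\ref{Local H} and the universal property of the blow-up, and to concentrate on the construction of $\widetilde{\phi}$. For the first assertion I would work inside the ambient blow-up $\chi':\widetilde{X}'\to X'$. Fix $P\in\Sigma$ and $P'\in\Sigma'$ above it, choose coordinates $(\pi^\ast t,x_1,\dots,x_N)$ of $X$ at $P$ as in~\ref{local desc}, and a uniformizer $s$ of $C'$ at $\sigma(P')$, so that $\sigma^\ast t=v\,s^{\delta_P}$ with $v$ a unit. Since $\Sigma$ is cut out near $P$ by $(x_{1\mid H},\dots,x_{N\mid H})$, the ideal $(\tau_{\mid H'}\circ\nu')^\ast\cI_\Sigma$ is generated by the restrictions to $\widetilde{H}'$ of $\chi'^\ast x_1,\dots,\chi'^\ast x_N$. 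By Proposition~\ref{Local H}, $H'$ is $\{v\,s^{\delta_P}=F_{\delta_P}(x)+(\text{order}>\delta_P)\}$ near $P'$, so its projective tangent cone $E'_{P'}\subset Z'_{P'}=\PP(T_{P'}X')$ has equation $v(P')\,S^{\delta_P}=F_{\delta_P}(X)$ in coordinates $[S:X_1:\dots:X_N]$ dual to $(s,x_1,\dots,x_N)$; as $v(P')\neq0$ it avoids the point $[1:0:\dots:0]$ defined by $X_1=\dots=X_N=0$. Off that point the ideal $(\chi'^\ast x_1,\dots,\chi'^\ast x_N)$ equals $\cI_{Z'_{P'}}$, and restricting to $\widetilde{H}'$, which meets $Z'_{P'}$ transversally along $E'_{P'}$, gives $(\tau_{\mid H'}\circ\nu')^\ast\cI_\Sigma=\cO_{\widetilde{H}'}(-E'_{P'})$ near $P'$; hence globally this inverse image is the Cartier divisor $E'$.

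The second assertion then follows at once: the inverse image ideal of $\Sigma$ being invertible, the universal property of the blow-up $\nu:\widetilde{H}\to H$ of $\Sigma$ produces a unique $\upsilon:\widetilde{H}'\to\widetilde{H}$ with $\nu\circ\upsilon=\tau_{\mid H'}\circ\nu'$.

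For the isomorphism $\widetilde{\phi}$, the key structural point is that $\upsilon$ is a morphism of log schemes for the boundary divisors $\widetilde{H}'_{\Delta'}$ and $\widetilde{H}_\Delta$: from $\nu\circ\upsilon=\tau_{\mid H'}\circ\nu'$ and $\Delta'=\sigma^{-1}(\Delta)$ one checks that $\upsilon^{-1}(\widetilde{H}_\Delta)=\widetilde{H}'_{\Delta'}$, so the pullback of relative logarithmic forms introduces no poles and defines a morphism of vector bundles $\widetilde{\phi}:\upsilon^\ast\omega^1_{\widetilde{H}/C}\to\omega^1_{\widetilde{H}'/C'}$ on all of $\widetilde{H}'$. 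Over $C'\setminus\Delta'$ both families are smooth, $\widetilde{H}'$ is the base change of $\widetilde{H}$ along $\sigma$, and $\omega^1=\Omega^1$; there $\widetilde{\phi}$ coincides with the base-change isomorphism $\phi$, and since the sheaves are torsion-free, $\widetilde{\phi}$ is the claimed extension of the restriction of $\phi$. It then remains to see that $\widetilde{\phi}$ is an isomorphism. As its source and target are vector bundles of equal rank $N-1$ on the smooth variety $\widetilde{H}'$ and $\det\widetilde{\phi}$ is invertible over $C'\setminus\Delta'$, it suffices to prove that $\widetilde{\phi}$ is an isomorphism at the generic point of each component of $\widetilde{H}'_{\Delta'}$, namely of $\widetilde{H'_{\Delta'}}$ and of each $E'_{P'}$: then $\det\widetilde{\phi}$ vanishes along no prime divisor, hence is everywhere non-zero.

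These two generic-point checks are where the proof really lies, and I expect them to be the main obstacle, since this is exactly where the non-reduced fibre $\widetilde{H}_\Delta=\widetilde{H_\Delta}+\sum_P\delta_P\,E_P$ on the target of $\upsilon$ must be absorbed by the ramification of $\sigma$. I would run them in the explicit charts of Proposition~\ref{Local H}. Near a generic point of $E'_{P'}$ (where $F_{\delta_P}(1,\xi)\neq0$) the strict transforms are graphs, $\upsilon$ is an isomorphism of charts $(w,\xi_2,\dots,\xi_N)\mapsto(w,u_2,\dots,u_N)$, and the base coordinate reads $w^{\delta_P}\tilde{F}$ on $\widetilde{H}$ against $\rho\,w$ on $\widetilde{H}'$, with $\tilde{F},\rho$ units; eliminating $dw/w$ through the respective relations $\delta_P\,dw/w+(\cdots)$ and $dw/w+(\cdots)$ shows that $\upsilon^\ast\omega^1_{\widetilde{H}/C}$ and $\omega^1_{\widetilde{H}'/C'}$ are both free on $d\xi_2,\dots,d\xi_N$ along $E'_{P'}$, and $\widetilde{\phi}$ sends the class $\upsilon^\ast[du_j]=[d\xi_j]$ to $[d\xi_j]$, carrying a basis to a basis. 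Near a generic point of $\widetilde{H'_{\Delta'}}$, by contrast, $\upsilon$ is ramified of index $\delta_P$ along the fibre, a fibre-defining coordinate satisfying $\upsilon^\ast y_H=(\text{unit})\,y^{\delta_P}$, so $\upsilon^\ast(dy_H/y_H)=\delta_P\,dy/y+(\text{reg})$ is again a regular section and $\widetilde{\phi}$ is the identity on the transverse relative coordinates; in both cases the leading coefficient (respectively $\delta_P$ and $1$) is a non-zero scalar, so $\widetilde{\phi}$ is an isomorphism at these generic points. This yields the required isomorphism on all of $\widetilde{H}'$ and completes the proof.
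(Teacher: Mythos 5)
Your proof is correct, and although it rests on the same local models (Proposition \ref{Local H} and the standard charts of the two blow-ups), it is organized quite differently from the argument in Section \ref{Proof both omega}. For the first assertion you compute the inverse image ideal of $\Sigma$ directly: since $E'_{P'}$ is the projectivized tangent cone $\{u(P')S^{\delta_P}=F_{\delta_P}(X)\}$ and avoids the point $[1:0:\cdots:0]$ of $Z'_{P'}$, the ideal generated by the $\chi'^{*}x'_i$ restricts on $\widetilde{H}'$ to the ideal of $E'_{P'}$; the paper reaches the same conclusion by inspecting the charts $V'_0$ and $V'_i$ separately. The more substantial divergence concerns $\widetilde{\phi}$. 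The paper constructs explicit local frames of $\omega^1_{\widetilde{H}'/C'}$ and of $\upsilon^{*}\omega^1_{\widetilde{H}/C}$ near \emph{every} point $Q'$ of $E'$ (using the smoothness of $F_{\delta_P}$ to produce the auxiliary coordinate $w$) and matches them, which forces the two-case analysis occupying most of Section \ref{Proof both omega}. You instead obtain $\widetilde{\phi}$ a priori from the functoriality of relative logarithmic differentials --- legitimate here because $\upsilon^{-1}(\widetilde{H}_\Delta)=\widetilde{H}'_{\Delta'}$ set-theoretically and $\sigma^{-1}(\Delta)=\Delta'$ --- and then reduce its bijectivity to the non-vanishing of $\det\widetilde{\phi}$ along each component of $\widetilde{H}'_{\Delta'}$, which only requires the computation at \emph{generic} points, where $w'_1$ and $\widetilde{\pi'^{*}t'}_1$ are units and no genuine case distinction is needed. (In fact the check along $\widetilde{H'_{\Delta'}}$ is automatic, since its generic point lies in $H'\setminus\Sigma'$, where $\widetilde{\phi}$ restricts to the base-change isomorphism $\phi$ of ordinary relative differentials of the smooth morphisms obtained by removing $\Sigma$ and $\Sigma'$; only the computation along the $E'_{P'}$ carries content, and yours is consistent with the formulas \eqref{ups on V'_1 x_1 bis} and \eqref{ups on V'_1 x_i bis} of the paper.) Your route buys a noticeably shorter and more structural argument at the price of invoking log functoriality and the divisoriality of the vanishing locus of $\det\widetilde{\phi}$; the paper's buys explicit frames valid at every point of $E'$, at the price of the lengthy chart-by-chart verification.
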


As indicated in \ref{123} above, we defer the proof of Proposition \ref{both omega iso} to Section \ref{Proof both omega}. 

\subsection{Characteristic classes of $\omega^{1}_{\widetilde{H}/C}$}

\subsubsection{}

The following proposition is a generalization of \cite[Lemma 5.1.1]{Mordant22}. 

\begin{proposition}\label{c omega on H}
The following equality holds in $\CH^N(H)$:
\begin{equation} \label{eq nu ast c1 cN-1 omega semihom}
\nu_* (c_1 c_{N-1})(\omega^{1 \vee}_{\widetilde{H}/C})
= (c_1 c_{N-1})([T_{\pi_{| H}}])
+ (-1)^{N-1} \sum_{P \in \Sigma} (1-N/\delta_P)\, \big((\delta_P-1)^N + (-1)^{N+1}\big) \, [P].
\end{equation}
\end{proposition}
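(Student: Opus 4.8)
The plan is to compute $\nu_*(c_1 c_{N-1})(\omega^{1\vee}_{\widetilde{H}/C})$ by comparing the logarithmic relative differentials $\omega^1_{\widetilde{H}/C}$ on the blow-up $\widetilde{H}$ with the ordinary relative tangent sheaf $T_{\pi_{|H}}$ on $H$, and localizing the discrepancy at the exceptional divisors $E_P$ over each singular point $P\in\Sigma$. Away from $\Sigma$, the morphism $\nu:\widetilde{H}\to H$ is an isomorphism and $\omega^1_{\widetilde{H}/C}$ agrees with $\Omega^1_{H/C}$, so $\nu_*$ of the characteristic class recovers the class $(c_1c_{N-1})([T_{\pi_{|H}}])$ on the main stratum; the entire correction is supported on $\bigsqcup_{P\in\Sigma} E_P$. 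The first term on the right-hand side of \eqref{eq nu ast c1 cN-1 omega semihom} is thus the ``generic'' contribution, and the sum over $P\in\Sigma$ is the localized correction that I must pin down.

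\textbf{First} I would set up the local model near a fixed $P\in\Sigma$ using Proposition \ref{Local H}, so that on the blow-up $\chi:\widetilde{X}\to X$ the exceptional divisor $Z_P\cong\PP(T_P X)\simeq\PP^N$ carries the strict transform $\widetilde{H}$ meeting it in $E_P=Z_P\cap\widetilde{H}$, a smooth hypersurface of degree $\delta_P$ in $\PP^{N-1}\simeq\PP(T_P X_{\pi(P)})$ (this is the same local geometry exploited in Proposition \ref{alpha_x on tilde(H)' semi-hom}). \textbf{Next} I would produce an exact sequence relating $\omega^1_{\widetilde{H}/C}$ to $\nu^*\Omega^1_{H/C}$ (equivalently to $\nu^*[T_{\pi_{|H}}]^\vee$), twisted by $\cO(E_P)$ to account for the logarithmic poles along the non-reduced fiber component $\delta_P E_P$ recorded in \eqref{sing fibers semihom}. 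Concretely, since $\widetilde{H}_\Delta = \widetilde{H_\Delta} + \sum_P \delta_P E_P$ is a strict normal crossings divisor, the logarithmic structure forces a modification of the relative cotangent sheaf concentrated along each $E_P$, and I would express the difference $[\omega^{1\vee}_{\widetilde{H}/C}] - \nu^*[T_{\pi_{|H}}]$ in $K^0$ as an explicit class supported on the $E_P$.

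\textbf{Then} the computation reduces, via the projection formula for $\nu$ and the fact that $\nu_*$ kills all but the top-degree contribution on each $E_P$, to an integral over $E_P$ of a product of Chern classes of the normal and tangent bundles of $E_P\subset Z_P\cong\PP^N$. This is precisely the type of integral evaluated in Proposition \ref{alpha_x on tilde(H)' semi-hom}: with $h=c_1(\cO(1))$ on the ambient $\PP^{N-1}$ and $E_P$ of degree $\delta_P$, the relevant quantity is a coefficient extraction of the form $\bigl[(1+y)^N/(1+\delta_P y)\bigr]^{[\,\cdot\,]}$, to which the combinatorial identity \eqref{non square denominator semihom} of Proposition \ref{formal identities semihom} applies directly. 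I expect the factor $(1-N/\delta_P)$ to emerge from the mismatch between the $c_1$ (degree one, giving the $1/\delta_P$ weight from the normal direction and the $N$ from the rank of the tangent bundle) and the $c_{N-1}$ factors, while $(\delta_P-1)^N + (-1)^{N+1}$ is the value of the coefficient extraction, and the sign $(-1)^{N-1}$ tracks the parity coming from the inversion of $(1+\delta_P y)$.

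\textbf{The main obstacle} will be getting the local exact sequence for $\omega^1_{\widetilde{H}/C}$ exactly right, including the precise twists by $\cO(E_P)$ dictated by the multiplicity $\delta_P$ in \eqref{sing fibers semihom}: this is where the logarithmic (as opposed to ordinary) differentials and the non-reducedness of the fiber genuinely enter, and where the coefficients of the correction term are determined. A subsidiary check is that the two Chern-class factors $c_1$ and $c_{N-1}$ must be distributed correctly between the generic part and the localized part, so that cross-terms assemble into the single product $(1-N/\delta_P)\bigl((\delta_P-1)^N+(-1)^{N+1}\bigr)$ rather than into separate lower-order contributions; I would verify this by matching against the ordinary double point case $\delta_P=2$, where \eqref{eq nu ast c1 cN-1 omega semihom} must specialize to \cite[Lemma 5.1.1]{Mordant22}.
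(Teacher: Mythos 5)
Your plan is correct and follows essentially the same route as the paper: the paper also writes $c_r(\omega^{1\vee}_{\widetilde{H}/C})$ as $\nu^*c_r([T_{\pi_{|H}}])$ plus correction terms proportional to powers of $\eta_P=c_1(\cO_{\widetilde{H}}(E_P))$ (obtained from the comparison formula \cite[Proposition 3.1.7]{Mordant22} for snc degenerations together with the blow-up formula for the relative tangent class — this is exactly the ``local exact sequence'' input you flag as the main obstacle), then evaluates the product $c_1c_{N-1}$ using the identity \eqref{non square denominator semihom} and pushes forward via $\nu_*\eta_P^N=(-1)^{N-1}[P]$. The one point worth noting is that your worry about cross-terms is resolved by the fact that $\nu^*[T_{\pi_{|H}}]$ restricts to a trivial class on each $E_P$, so the mixed products $\nu^*c_1\cdot\eta_P^{N-1}$ and $\nu^*c_{N-1}\cdot\eta_P$ vanish and only the pure correction term survives.
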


In the right-hand side of \eqref{eq nu ast c1 cN-1 omega semihom}, by $[T_{\pi_{| H}}]$ we denote as usual  the relative tangent class~$[T_H] -\pi_{\mid H}^\ast [T_C]$ in~$K^0(H)$ of the morphism $\pi_{| H}:H \ra C$.

The remainder of this subsection is devoted to the proof of Proposition \ref{c omega on H}. 
The method we shall use for this proof is the same  as the one used in the derivation of \cite[Lemma 5.1.1]{Mordant22}, and we shall simply state without proof the intermediate results generalizing the ones in \cite[Sections 5.2 and 5.3]{Mordant22} which were used to establish \cite[Lemma 5.1.1]{Mordant22}.

\subsubsection{}

Let us first introduce some notation and recall several results of \cite[5.2]{Mordant22}, which  did not assume any condition about the singularities of the morphism $\pi_{| H} : H \ra C$, and therefore hold in the present setting.

As in \cite{Mordant22}, for every $P$ in $\Sigma$, we introduce the following characteristic class:
$$\eta_P := c_1(\cO_{\widetilde{H}} (E_P) ) \in \CH^1(\widetilde{H}).$$

As in \cite[Proposition 5.2.1]{Mordant22}, for every point $P$ in $\Sigma,$ the following equality holds in $\CH^N(H)$:
\begin{equation} \label{eq nu_* eta_p^N semihom}
\nu_\ast \eta_P^N = (-1)^{N-1} [P],
\end{equation}
and for every non-negative integer $r,$ the following equality holds in $\CH^r(E_P)$:
\begin{equation} \label{eq c_r tangent exceptional divisor}
c_r(T_{E_P}) = (-1)^r \binom{N}{r} \eta_{P | E_P}^r.
\end{equation}

For every $P$ in $\Sigma$ and for every non-negative integer $r,$ we may also directly apply equality \cite[(5.2.5)]{Mordant22} to the smooth hypersurface $Q_P := \widetilde{H_\Delta} \cap E_P$ of degree $m_P := \delta_P$ in the projective space $E_P$, and  get the following equality in $\CH^r(E_P)$:
\begin{equation}\label{eq c_r tangent delta-ic}
i_{\widetilde{H_\Delta} \cap E_P, E_P \ast} c_r(T_{\widetilde{H_\Delta} \cap E_P}) = (-1)^{r+1} \delta_P \bigg [\frac{(1 + y)^N}{1 + \delta_P y} \bigg ]^{[r]} \eta_{P | E_P}^{r+1},
\end{equation}
where $i_{\widetilde{H_\Delta} \cap E_P, E_P}$ denotes the inclusion of $\widetilde{H_\Delta} \cap E_P$ in $E_P$ and where, as above, $f(y)^{[r]}$ denotes the coefficient of $y^r$ in some formal series $f(y) \in \C[[y]]$.

Moreover, as in \cite[Corollary 5.2.4]{Mordant22}, for every positive integer $r,$ the following equality holds in $\CH^r(\widetilde{H})$:
\begin{equation}\label{eq T rel blowup semihom} 
c_r([T_{\pi_{| H} \circ \nu}]) = \nu^\ast c_r([T_{\pi_{| H}}]) + (-1)^r \Bigg [\binom{N}{r} - \binom{N}{r-1} \Bigg ] \sum_{P \in \Sigma} \eta_P^r.
\end{equation}

\subsubsection{}
Applying the comparison in \cite[Proposition 3.1.7, (3.1.12)]{Mordant22} of the Chern classes of the vector bundle $\omega^{1 \vee}_{\widetilde{H}/C}$ and the relative tangent class $[T_{\pi_{| H} \circ \nu}]$  in the case of a degeneration whose singular fibers are divisors with strict normal crossings with empty 3-codimensional strata\footnote{This is the case here because of the expression \eqref{sing fibers semihom} for the divisor $\widetilde{H}_\Delta$.}, we obtain the following equality in $\CH^r(\widetilde{H})$ for every non-negative integer $r$:
\begin{equation}\label{eq cr omega with strata}
 c_r(\omega^{1 \vee}_{\widetilde{H}/C}) = c_r([T_{\pi_{| H} \circ \nu}]) + \sum_{P \in \Sigma} (\delta_P-1)\, i_{E_P *} c_{r-1}(T_{E_P}) - \sum_{P \in \Sigma} \delta_P\,  i_{\widetilde{H_\Delta} \cap E_P *} c_{r-2}(T_{\widetilde{H_\Delta}}  \cap E_P).
\end{equation}

Combining equality \eqref{eq cr omega with strata} with equalities \eqref{eq c_r tangent exceptional divisor} applied to $r' := r-1$ and \eqref{eq c_r tangent delta-ic} applied to $r' := r-2$ yields the following equality in $\CH^r(\widetilde{H})$ generalizing \cite[Proposition 5.3.2]{Mordant22}, valid for every non-negative integer $r$:
\begin{equation}\label{eq cr omega T_g semihom} 
c_r(\omega^{1 \vee}_{\widetilde{H}/C}) = c_r([T_{\pi_{| H} \circ \nu}]) + \sum_{P \in \Sigma} \alpha(N,r,\delta_P)\,  \eta_P^r,
\end{equation}
where $\alpha(N,r,\delta)$ is the integer defined by:
$$\alpha(N,r,\delta) = (-1)^{r-1} \Bigg[(\delta - 1) \binom{N}{r-1} - \delta^2 \bigg[\frac{(1 + y)^N}{1 + \delta y}\bigg]^{[r-2]} \Bigg].$$

When $r$ is positive, combining equality \eqref{eq cr omega T_g semihom} with \eqref{eq T rel blowup semihom} yields the following equality in $\CH^r(\widetilde{H})$ generalizing \cite[Corollary 5.3.3]{Mordant22}:
\begin{equation}\label{eq cr omega nu ast semihom}
c_r(\omega^{1 \vee}_{\widetilde{H}/C}) = \nu^* c_r([T_{\pi_{| H}}]) + \sum_{P \in \Sigma} \beta(N,r,\delta_P) \, \eta_P^r,
\end{equation}
where $\beta(N,r,\delta)$ is the integer defined by:
$$\beta(N,r,\delta) = (-1)^{r} \Bigg[\binom{N}{r} - \delta \binom{N}{r-1} + \delta^2 \bigg[\frac{(1 + y)^N}{1 + \delta y}\bigg]^{[r-2]} \Bigg].$$

When $N$ is at least 2, multiplying equality \eqref{eq cr omega nu ast semihom} applied to $r := 1$ with itself applied to $r := N-1,$ and using \eqref{non square denominator semihom} applied to $n := N,$  $r := 3$ and $a := \delta_P$,
and the fact that the restriction to $E_P$ of the class $\nu^* [T_{\pi_{| H}}]$ is the class of a trivial vector bundle, we obtain the following equality in $\CH^N(H)$ generalizing \cite[Corollary 5.3.4]{Mordant22}: 
\begin{equation}\label{eq c1 cN-1 omega and nu ast semihom}
 (c_1 c_{N-1})(\omega^{1 \vee}_{\widetilde{H}/C}) = \nu^* (c_1 c_{N-1})([T_{\pi_{| H}}]) + \sum_{P \in \Sigma} (1 - N/\delta_P)\, \big((\delta_P-1)^N + (-1)^{N+1} \big)\,  \eta_P^N.
\end{equation}

When $N=1$, equality \eqref{eq c1 cN-1 omega and nu ast semihom} follows directly from equality \eqref{eq cr omega nu ast semihom} applied to $r:=1$. 

Pushing forward equality \eqref{eq c1 cN-1 omega and nu ast semihom} by $\nu$  and using equality \eqref{eq nu_* eta_p^N semihom} concludes the proof. 

\section{The geometry of the hypersurface $H$ in the  smooth pencil $X$  over $C$}\label{geom hyp smooth pencil}

\subsection{The top Chern class of $\Omega^1_{X/C | H} \otimes L_{| H}$}\label{subsec top Chern sing semihom}
Let us define a morphism $s$ of vector bundles on $H$ by the composition:
$$
s : L^\vee _{| H} \simeq \cN_H X^\vee \lra \Omega^1_{X \mid H} \lra \Omega^1_{X/C | H}.
$$
We will also see $s$ as a section of the vector bundle $\Omega^1_{X/C | H} \otimes L_{| H}$.

The following proposition is a generalization of the second statement of \cite[Lemma 6.1.3]{Mordant22}.

\begin{proposition}\label{Milnor semi hom}
The section $s$ is a regular section\footnote{in the sense of \cite[B.3.4]{Fulton98}.} of $\Omega^1_{X/C | H} \otimes L_{| H}$, and the 0-cycle in $H$ defined by its vanishing is given by:
$$[(s=0)] = \sum_{P \in \Sigma} (\delta_{P}-1)^N [P].$$
\end{proposition}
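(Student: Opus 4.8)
The plan is to establish the two claims separately: first that $s$ is a regular section, and then to identify its zero locus with the prescribed $0$-cycle. The key local input is Proposition~\ref{Local H}, which near each $P \in \Sigma$ presents $H$ in $X$ by an equation $\pi^\ast t = F_{\delta_P}(x_1,\dots,x_N) + F_{>\delta_P}$ in the local coordinates $(\pi^\ast t, x_1,\dots,x_N)$, with $F_{\delta_P}$ homogeneous of degree $\delta_P$ and nonsingular as a projective hypersurface. First I would unwind the definition of $s$: the conormal sheaf $\cN_H X^\vee \simeq L^\vee_{| H}$ is generated locally by the differential $df$ of the defining equation $f := F_{\delta_P}(x_1,\dots,x_N) + F_{>\delta_P} - \pi^\ast t$, and the composite with the projection $\Omega^1_{X | H} \to \Omega^1_{X/C | H}$ simply kills the $d(\pi^\ast t)$ component. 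Hence, in the local trivialization afforded by the relative coframe $dx_{1| H},\dots,dx_{N| H}$ (recall from \ref{local desc} that the $x_{i | H}$ form a local coordinate system on $H$ near $P$), the section $s$ is represented by the vector of relative partial derivatives $\big(\partial f/\partial x_1,\dots,\partial f/\partial x_N\big)_{| H}$.

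Away from $\Sigma$ the morphism $\pi_{| H}$ is smooth, so $\Omega^1_{X/C| H} \to \Omega^1_{H/C}$ is surjective with the image of $s$ generating the conormal direction; since $H_x$ is then smooth, $s$ is nowhere vanishing off $\Sigma$. Thus $(s=0)$ is supported on the finite set $\Sigma$, and I would verify regularity locally at each $P$: because $\Omega^1_{X/C| H} \otimes L_{| H}$ has rank $N$ and $H$ has dimension $N$, it suffices that the local zero scheme has dimension $0$, i.e.\ that $\partial F_{\delta_P}/\partial x_1,\dots,\partial F_{\delta_P}/\partial x_N$ (together with the higher-order corrections from $F_{>\delta_P}$) cut out an isolated point. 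This is exactly the statement that the affine cone over $\PP(C_P H_{\pi(P)})$ has an isolated singularity at the origin, which holds because $F_{\delta_P}$ has nonzero discriminant by the smoothness hypothesis on the projective tangent cone. Regularity then follows from \cite[B.3.4]{Fulton98}, and the zero cycle $[(s=0)]$ is the localized top Chern class $c_N(\Omega^1_{X/C| H}\otimes L_{| H})$ supported on $\Sigma$, with a well-defined local multiplicity $\mu_P$ at each $P$.

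It remains to compute $\mu_P = (\delta_P - 1)^N$. The local multiplicity is the length of $\cO_{H,P}/(\partial f/\partial x_1,\dots,\partial f/\partial x_N)$, which by the Milnor-number formula for an isolated hypersurface singularity equals the dimension of the local algebra of the map-germ $(\partial f/\partial x_1,\dots,\partial f/\partial x_N) : (\C^N,0)\to(\C^N,0)$. Since $F_{>\delta_P} \in \mathfrak{m}_{X,P}^{\delta_P+1}$ contributes only terms of degree strictly larger than $\delta_P - 1$ to each partial derivative, the leading (degree $\delta_P - 1$) part of this map-germ is the gradient of the homogeneous polynomial $F_{\delta_P}$; the nonvanishing discriminant guarantees that this gradient is a finite map, so by the standard semihomogeneous (finite-determinacy) principle the Milnor number is computed from the leading terms alone. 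I would then evaluate it as the product of the degrees of the components of $\nabla F_{\delta_P}$, namely $(\delta_P-1)^N$, or equivalently invoke the classical formula $\mu = (\delta_P-1)^N$ for the Milnor number of a nondegenerate homogeneous singularity of degree $\delta_P$ in $N$ variables. The main obstacle is the rigorous justification that the higher-order terms $F_{>\delta_P}$ do not alter the local multiplicity; I expect to handle this either by the finite-determinacy of semihomogeneous singularities (as in \cite[Part II, 12--13]{AGZV85}) or, more self-containedly, by a degeneration/conservation-of-number argument showing that the localized Chern class is deformation-invariant as $F_{>\delta_P}$ is scaled to zero, keeping the singularity isolated throughout.
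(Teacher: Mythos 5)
Your proposal is correct and follows essentially the same route as the paper: identify the zero locus of $s$ with $\Sigma$, deduce regularity from the codimension count, express $s$ locally as the vector of relative partial derivatives of the equation from Proposition~\ref{Local H}, and identify the local multiplicity with the Milnor number, computed as $(\delta_P-1)^N$ via the reduction of the semihomogeneous case to the homogeneous one (the paper cites \cite{Milnor-Orlik70} and \cite[Theorem p.~194]{AGZV85} for exactly the two steps you outline). The only cosmetic difference is that you phrase the homogeneous computation as a B\'ezout product of the degrees of the components of $\nabla F_{\delta_P}$, where the paper invokes the Milnor--Orlik formula directly.
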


\begin{proof}
By definition, the subset of $X$ defined by the vanishing of $s$ is precisely the set of critical points of the morphism $\pi_{| H} : H \ra C,$ namely $\Sigma$. The regularity of the section $s$ follows from the fact that $\Sigma$ has codimension $N$ in the smooth $N$-dimensional manifold $H$, while the vector bundle~$\Omega^1_{X/C | H} \otimes L_{| H}$ is of rank $N$.

Now let us compute the multiplicity of the $0$-cycle $[(s = 0)]$ at some point $P$ in $\Sigma.$

As in Subsection \ref{local desc} and Proposition \ref{Local H} above, we denote by
$t$ a local coordinate of $C$ near~$\pi(P)$, and we complete its pullback $\pi^* t$  into a local coordinate system $(\pi^* t, x_1,\dots,x_N)$ of $X$ near $P$.  
We still denote by $F_{\delta_P}(T_1, \dots, T_N)$  an equation in $\C[T_1, \dots, T_N]$ of the projective tangent cone $\PP(C_P H_{\pi(P)})$, seen as a projective hypersurface in $\PP(T_P H)$ that we identify with $\PP^{N-1}_\C$ through the (differential at $P$ of the) coordinate system $(x_{1 | H},\dots,x_{N | H})$. It is a  homogeneous  polynomial of  degree $\delta_P$, with non-zero discriminant. 

According to Proposition \ref{Local H}, we may assume that the germ  at $P$ of the divisor $H$ in $X$ is 
 defined by the equation:
\begin{equation} \label{eq H in X F delta Milnor}
 \pi^* t = F_{\delta_P}(x_1,\dots,x_N) + F_{> \delta_P},
 \end{equation}
for some
 $F_{>\delta_P}$ in $\mathfrak{m}_{X,P}^{\delta_P +1}$.

The local trivialization of the line bundle $L:= \cO_X(H)$ on $X$ given by this equation and the local frame of the vector bundle $\Omega^1_{X/C}$ on $X$ defined by the relative differentials $([d x_1],\dots, [d x_N])$ induce by restriction and tensor product a local frame of the vector bundle $\Omega^1_{X/C | H} \otimes L_{| H}$ on $H$. In this local frame, the section $s$ is given by:
$$s = \big(\partial_{x_{1 | H}} (F_{\delta_P}(x_{1 | H},\dots,x_{N | H}) + F_{> \delta_P | H}), \dots, \partial_{x_{N | H}} (F_{\delta_P}(x_{1 | H},\dots,x_{N | H}) + F_{> \delta_P | H})\big).$$

Consequently the multiplicity at $P$ of the $0$-cycle $(s = 0)$ is precisely the multiplicity at $P$ of the function on $H$ given by the restriction to $H$ of either side of equality \eqref{eq H in X F delta Milnor}, or equivalently, the Milnor number of the singularity at $P$ of the morphism $\pi_{| H} : H \ra C.$

The fact that this multiplicity is equal to $(\delta_P-1)^N$ is therefore a consequence of the computation of the Milnor number of an isolated semiquasihomogeneous singularity, in the particular semihomogeneous case where the weights are all equal to 1. This computation was done in \cite{Milnor-Orlik70} in the quasihomogeneous case where $F_{> \delta_P}$ vanishes; the reduction of the semiquasihomogeneous case to this case appears for instance in \cite[Theorem p. 194]{AGZV85}. \end{proof}

From Proposition \ref{Milnor semi hom}, we may derive the equality \eqref{sing hom hyp}  as in the proof of \cite[(6.1.10)]{Mordant22}, simply replacing the 0-cycle $i_* [\Sigma]$ in \emph{loc. cit.} by $\sum_{P \in \Sigma} (\delta_P-1)^N [P].$ This proof uses the fact that the cycle~$[(s = 0)]$ defined by the vanishing of the regular section $s$ is precisely the top Chern class of the vector bundle $\Omega^1_{X/C | H} \otimes L_{| H}$ (see for instance \cite[Example 3.2.16, (ii)]{Fulton98}), together with simple computations of Chern classes of vector bundles.

These computations also yield the following equality in $\CH_0(X)$, which shall be useful to us later:
\begin{equation}\label{sing loc as sum hom} 
\sum_{P \in \Sigma} (\delta_{P}-1)^N \,[P] = \sum_{k=0}^{N} c_1(L)^{N+1-k} c_k(\Omega^1_{X/C}).
\end{equation}

\subsection{Characteristic classes of the hypersurfaces $H$ and $H'$}\label{Char classes H H'}

With the notation introduced in \ref{641} and \ref{642}, let us denote by $\eta'$  the generic point of the curve $C'.$ Recall that the complex scheme~$\widetilde{H}'$ is smooth,  and that the divisor $\widetilde{H}'_{\Delta'}$ is reduced with strict normal crossings as shown by equation \eqref{sing fibers semihom '}, so that for every integer $n,$ the local monodromy at every point in $\Delta'$ of the variation of Hodge structures:
$$\H^n(H'_{\eta'} / C'_{\eta'}) \simeq \H^n(\widetilde{H}'_{\eta'} / C'_{\eta'})$$
is unipotent.

The following proposition is an analogue of \cite[Proposition 6.1.5]{Mordant22}.

\begin{proposition} 
\label{GK Lefschetz semihom}
If the line bundle $L'$ on $X'$ is ample relatively to the morphism $\pi'$, then for every integer $n$ such that $n < N-1$, the following equality holds in $\CH_0(C')$:
\begin{equation}\label{eq GK Lefschetz small n semihom}
c_1\big(\GK_{C'}(\H^n(H'_{\eta'} / C'_{\eta'}))\big) = c_1\big(\GK_{C'}(\H^n(X'/C'))\big), 
\end{equation}
and for every integer $n$ such that $n > N-1$, the following equality holds in $\CH_0(C')_\Q$:
\begin{equation}\label{eq GK Lefschetz large n semihom}
c_1\big(\GK_{C'}(\H^n(H'_{\eta'} / C'_{\eta'}))\big) = c_1\big(\GK_{C'}(\H^{n+2}(X'/C'))\big). 
\end{equation}
\end{proposition}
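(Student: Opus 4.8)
The plan is to reduce the statement to the Lefschetz hyperplane theorem applied on the generic fibre, and then to promote the resulting isomorphisms of Hodge structures to isomorphisms of the Griffiths line bundles over the whole curve $C'$, exactly as in the proof of the analogous \cite[Proposition 6.1.5]{Mordant22}.

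First I would work over the generic point $\eta'$. Since $L'$ is relatively ample, the generic fibre $H'_{\eta'}$ is a smooth ample hypersurface of dimension $N-1$ in the smooth projective variety $X'_{\eta'}$ of dimension $N$. The Lefschetz hyperplane theorem then gives that the restriction morphism $\H^n(X'_{\eta'}) \to \H^n(H'_{\eta'})$ is an isomorphism of polarized Hodge structures for $n < N-1$, while for $n > N-1$ Poincar\'e duality on both fibres turns the dual statement into a Gysin isomorphism $\H^n(H'_{\eta'}) \lrasim \H^{n+2}(X'_{\eta'})$. Both maps are morphisms of the ambient variations of Hodge structures, so they assemble into isomorphisms of variations of Hodge structures over the open curve $C' \setminus \Delta'$.

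Second, I would extend these isomorphisms across $\Delta'$. The morphism $\pi'$ is smooth, so $\H^n(X'/C')$ underlies a variation of Hodge structures over all of $C'$; by \eqref{sing fibers semihom '} the fibres of $\widetilde{H}'/C'$ are semistable, hence the monodromy of $\H^n(H'_{\eta'}/C'_{\eta'}) \simeq \H^n(\widetilde{H}'_{\eta'}/C'_{\eta'})$ at every point of $\Delta'$ is unipotent. By functoriality of Deligne's canonical extension with respect to morphisms of local systems with unipotent monodromy, the restriction and Gysin isomorphisms over $C' \setminus \Delta'$ extend uniquely to isomorphisms of the canonically extended Hodge bundles, compatibly with their Hodge filtrations.

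Third, since the Griffiths line bundle $\GK_{C'}(\cdot)$ is a functor of the filtered Hodge bundle on $C'$ attached to the canonical extension and is invariant under isomorphisms, the extended isomorphisms yield $\GK_{C'}(\H^n(H'_{\eta'}/C'_{\eta'})) \simeq \GK_{C'}(\H^n(X'/C'))$ for $n < N-1$ and $\GK_{C'}(\H^n(H'_{\eta'}/C'_{\eta'})) \simeq \GK_{C'}(\H^{n+2}(X'/C'))$ for $n > N-1$; taking first Chern classes gives \eqref{eq GK Lefschetz small n semihom} and \eqref{eq GK Lefschetz large n semihom}. The main obstacle is the second step: controlling the behaviour of the Lefschetz and Gysin morphisms at the bad fibres so that they extend to the canonical extensions. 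This is exactly where the passage to $C'$ of Section \ref{ConstrSemstable} is essential --- it guarantees unipotent monodromy for the family $H'/C'$, without which the extension of the isomorphisms, and hence the equality of Griffiths line bundles, could fail.
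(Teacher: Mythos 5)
Your treatment of the case $n<N-1$ is exactly the paper's argument: weak Lefschetz applied to the smooth family $H'\setminus H'_{\Delta'}\hookrightarrow X'\setminus X'_{\Delta'}$ gives an isomorphism of variations of Hodge structures over $C'\setminus\Delta'$, which extends to the Deligne extensions because both sides have unipotent local monodromy (this being the whole point of the base change to $C'$), hence an isomorphism of Griffiths line bundles and the integral equality \eqref{eq GK Lefschetz small n semihom}.

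For $n>N-1$ you diverge from the paper, and this is where there is a gap. You invoke the Gysin isomorphism $\H^n(H'_{\eta'})\lrasim\H^{n+2}(X'_{\eta'})$ and then assert that $\GK_{C'}$ is ``invariant under isomorphisms.'' But the Gysin map is a morphism of Hodge structures only after a Tate twist: it identifies $\H^n(H'_{\eta'}/C'_{\eta'})(-1)$ with $\H^{n+2}(X'/C')$, not the untwisted variations. The Griffiths line bundle is not in general invariant under Tate twists (the twist re-indexes the Hodge filtration, and the discrepancy is controlled by determinant line bundles of the variations, which for polarized variations are only torsion). This is precisely why the paper does \emph{not} argue this way: it instead deduces \eqref{eq GK Lefschetz large n semihom} from \eqref{eq GK Lefschetz small n semihom} applied to $n':=2(N-1)-n$, combined with the Poincar\'e-duality comparison \cite[Proposition 2.5.2]{Mordant22}, which only asserts that the relevant line bundles agree up to $2$-torsion --- and this is exactly why the statement for $n>N-1$ is formulated in $\CH_0(C')_\Q$ rather than $\CH_0(C')$. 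Your argument as written would prove the integral equality, which is stronger than what is claimed and, absent a verification that $\GK_{C'}$ is Tate-twist invariant for the normalization used in \cite[Section 2]{MordantGIT1}, is not justified. To repair it, either check that normalization explicitly, or follow the paper and pass through the small-$n$ case together with the $2$-torsion Poincar\'e duality statement, accepting the rational conclusion.
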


\begin{proof} The proof uses the  arguments used in the proof of \cite[Proposition 6.1.5]{Mordant22}. Namely, for proving \eqref{eq GK Lefschetz small n semihom}, we apply Lefschetz's weak theorem to the smooth horizontal hypersurface~$H' \setminus H'_{\Delta'}$ in~$X' \setminus X'_{\Delta'}$ to obtain an isomorphism, for every $n < N-1,$ between the variations of Hodge structures~$\H^n\big((H' \setminus H'_{\Delta'}) / (C' \setminus \Delta')\big)$ and~$\H^n\big((X' \setminus X'_{\Delta'}) / (C' \setminus \Delta')\big)$ on $C' \setminus \Delta'.$ Then we use the unipotence of the local monodromy of both variations of Hodge structures to extend this isomorphism into an isomorphism of Deligne extensions over $C',$ and therefore of Griffiths line bundles.

Moreover equality \eqref{eq GK Lefschetz large n semihom} follows from equality \eqref{eq GK Lefschetz small n semihom} applied to the integer:
$$n' := 2(N-1)-n < N -1,$$
and from the consequence of Poincar\'e's duality \cite[Proposition 2.5.2]{Mordant22}, applied to the degeneration $\widetilde{H}' / C'$ whose singular fibers form a reduced divisor with strict normal crossings and to the smooth degeneration $X'/C'.$ Indeed, applied to these data,  \cite[Proposition 2.5.2]{Mordant22} asserts  that for every $n > N-1,$ the following line bundles on $C'$: 
$$\GK_{C'}\big(\H^{2(N-1)-n}(H'_{\eta'} / C'_{\eta'})\big) \otimes \GK_{C'}\big(\H^n(H'_{\eta'} / C'_{\eta'} )\big)^\vee$$
and:
$$\GK_{C'}\big(\H^{2(N-1)-n}(X'/C')\big) \otimes \GK_{C'}\big(\H^{n+2}(X'/C')\big)^\vee$$
are of 2-torsion.
\end{proof}

The following proposition is a generalization of \cite[(6.1.11)]{Mordant22}.
\begin{proposition} \label{class tangent hyp semihom} 
Denoting by $i$ the closed inclusion of $H$ in $X,$ the following equality holds in~$\CH_0(X)$:
\begin{multline}
i_\ast (c_1 c_{N-1})([T_{\pi_{\mid H}}]) = (c_1 c_N)(T_\pi) \\+ (-1)^N \big([(1 - c_1(L))^{-1} c_1(\Omega^1_{X/C})
c(\Omega^1_{X/C}) ]^{(N+1)} + \sum_{P \in \Sigma} (\delta_P-1)^N [P] - c_1(L) c_N(\Omega^1_{X/C}) \big). \label{eq class tangent hyp}
\end{multline}
\end{proposition}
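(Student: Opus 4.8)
The plan is to reduce the statement to a formal identity between Chern classes on $X$, obtained by pushing forward a $K$-theoretic expression for $[T_{\pi_{\mid H}}]$ from $H$ to $X$, and then to invoke the expression \eqref{sing loc as sum hom} for the singularity cycle $\sum_{P\in\Sigma}(\delta_P-1)^N[P]$ already established in Subsection \ref{subsec top Chern sing semihom}.

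First I would compute the class $[T_{\pi_{\mid H}}]$ in $K^0(H)$. Since $H$ is a smooth divisor in the smooth scheme $X$ with $\cN_H X \simeq L_{\mid H}$, the tangent exact sequence $0 \to T_H \to T_{X \mid H} \to \cN_H X \to 0$ yields $[T_H] = i^\ast[T_X] - i^\ast[L]$ in $K^0(H)$. Subtracting the pullback of $[T_C]$ and using that $\pi$ is smooth gives
$$[T_{\pi_{\mid H}}] = i^\ast\big([T_\pi] - [L]\big)\quad\text{in } K^0(H),$$
with $T_\pi = \Omega^{1\vee}_{X/C}$. Equivalently, this records that away from the critical locus $\Sigma$ the section $s$ of Subsection \ref{subsec top Chern sing semihom} fits into an exact sequence $0 \to L^\vee_{\mid H} \to \Omega^1_{X/C \mid H} \to \Omega^1_{H/C} \to 0$; the essential point is that the left-hand side $[T_{\pi_{\mid H}}]$ is a genuine $K$-theory class built from the vector bundles $T_H$ and $\pi_{\mid H}^\ast T_C$, so that no correction term supported on $\Sigma$ enters at this stage.

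Since Chern classes commute with pullback, $(c_1 c_{N-1})([T_{\pi_{\mid H}}]) = i^\ast\big[(c_1 c_{N-1})([T_\pi]-[L])\big]$, and the projection formula together with $i_\ast(1) = c_1(L)$ gives
$$i_\ast(c_1 c_{N-1})([T_{\pi_{\mid H}}]) = c_1(L)\,(c_1 c_{N-1})\big([T_\pi]-[L]\big)\quad\text{in } \CH_0(X).$$
It then remains to verify, by the splitting principle, that this agrees with the right-hand side of \eqref{eq class tangent hyp}. Writing $a := c_1(L)$ and expanding $c([T_\pi]-[L]) = c(\Omega^{1\vee}_{X/C})(1+a)^{-1}$, I would convert all Chern classes of $\Omega^{1\vee}_{X/C}$ into those of $\Omega^1_{X/C}$ via $c_k(\Omega^{1\vee}_{X/C}) = (-1)^k c_k(\Omega^1_{X/C})$; this is precisely what produces both the global sign $(-1)^N$ and the denominator $(1-a)^{-1}$ of the target expression. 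The degree-$(N+1)$ part then matches term by term, once $\sum_{P\in\Sigma}(\delta_P-1)^N[P]$ is rewritten, by means of equality \eqref{sing loc as sum hom}, as $\sum_{k=0}^N c_1(L)^{N+1-k}c_k(\Omega^1_{X/C})$.

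The main obstacle is the sign bookkeeping in this last step: the passage from $\Omega^{1\vee}_{X/C}$ to $\Omega^1_{X/C}$ and the reorganization of the three groups of terms $(c_1 c_N)(T_\pi)$, the degree-$(N+1)$ part of $(1-c_1(L))^{-1}c_1(\Omega^1_{X/C})c(\Omega^1_{X/C})$, and $c_1(L)c_N(\Omega^1_{X/C})$ must be arranged so that the extremal contributions cancel correctly. The only nonformal ingredient is equality \eqref{sing loc as sum hom}, which itself rests on Proposition \ref{Milnor semi hom} and the semihomogeneity hypothesis; everything else is the elementary index manipulation sketched above.
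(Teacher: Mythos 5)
Your proposal is correct and follows essentially the same route as the paper, which likewise reduces the statement to the normal-bundle relation $[T_{\pi_{\mid H}}]=i^{\ast}([T_{\pi}]-[L])$ in $K^0(H)$, the projection formula $i_\ast i^\ast = (\cdot)\, c_1(L)$, the substitution \eqref{sing loc as sum hom} for the weighted singularity cycle, and a formal Chern-class identity (the paper simply cites the analogous computation in \cite{Mordant22}). The remaining identity you leave as ``sign bookkeeping'' does check out: with $a:=c_1(L)$ and $\gamma_k:=c_k(\Omega^1_{X/C})$, both sides equal $(-1)^N a\,(a+\gamma_1)\sum_{k=0}^{N-1}\gamma_k\,a^{N-1-k}$.
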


\begin{proof}
The proof relies on the same computations of Chern classes on $H$ and $X$ as in the proof of \cite[(6.1.11)]{Mordant22}, simply replacing the 0-cycle $i_* [\Sigma]$ in \emph{loc. cit.} by $\sum_{P \in \Sigma} (\delta_P-1)^N [P]$, using the expression of this sum given by equality \eqref{sing loc as sum hom}, and using that the morphism of smooth schemes~$\pi_{| H} : H \ra C$ is smooth on a dense open subset of $H$ so that the following equality holds in the Grothendieck group $K^0(H)$:
\begin{equation*}
[T_{\pi_{| \mid H}}] = [\Omega^1_{H/C}]^\vee.  \qedhere
\end{equation*}
\end{proof}

\subsection{Completing the proof of Theorem \ref{GK semi hom in X}}\label{complete}
As equality \eqref{sing hom hyp} was established in Subsection \ref{subsec top Chern sing semihom}, to complete the proof of Theorem \ref{GK semi hom in X}, it only remains to show equality \eqref{eq: GK semi hom in X} under the hypothesis that the line bundle $L$ on $X$ is ample relatively to the morphism $\pi.$

\subsubsection{The alternating sum $\sum_{n = 0}^{2(N-1)} (-1)^{n-1} \hgt_{GK}\big(\H^n(H'_{\eta'} / C'_{\eta'}) \big)$
} We shall use the notation introduced in \ref{641} and \ref{642} and denote by $\eta'$  the generic point of the curve $C'$. As already observed at the beginning of \ref{Char classes H H'},   for every integer $n,$ the local monodromy at every point in $\Delta'$ of the variation of Hodge structures:
$$\H^n(H'_{\eta'} / C'_{\eta'}) \simeq \H^n(\widetilde{H}'_{\eta'} / C'_{\eta'})$$
is unipotent.

Recall also that by hypothesis the restriction $\pi_{| \Sigma} : \Sigma \ra C$ is injective, so that the restriction~$\pi'_{| \Sigma'} : \Sigma' \ra C'$ also is injective.

Consequently, applying the special case of Theorem \ref{GK in terms of classes in D non-reduced} stated in  equality \eqref{eq GK in terms of classes in D reduced semihom} to the degeneration~$\widetilde{H}'/C',$ which over $\eta'$ coincides with $H'/C'$ and whose divisor of singular fibers $\widetilde{H}'_{\Delta'}$ is a reduced divisor with strict normal crossings as shown by equality \eqref{sing fibers semihom '}, the following equality holds:
\begin{equation}\label{first eq computation GK stab semihom}
\sum_{n = 0}^{2(N-1)} (-1)^{n-1} \hgt_{GK}\big(\H^n(H'_{\eta'} / C'_{\eta'} ) \big)
= 1/12 \int_{\widetilde{H}'} (c_1 c_{N-1})(\omega^{1 \vee}_{\widetilde{H}'/C'}) + 1/12 \, \chi_{\mathrm{top}}(\widetilde{H'_{\Delta'}} \cap E').
\end{equation}

Using Proposition \ref{alpha_x on tilde(H)' semi-hom}, for every point $P'$ in $\Sigma'$ the Euler characteristic of the connected component $\widetilde{H'_{\Delta'}} \cap E'_{P'}$ of $\widetilde{H'_{\Delta'}} \cap E'$ is given by:
$$\chi_{\mathrm{top}}(\widetilde{H'_{\Delta'}} \cap E'_{P'}) =  (-1)^N / \delta_{\tau(P')}  \;  [(\delta_{\tau(P')}-1)^N + (-1)^N (N \delta_{\tau(P')} - 1)].$$

Since the morphism of smooth curves $\sigma : C' \ra C$ is ramified at every point $x'$ in~$\Delta'$ with ramification index $\delta_{P_{\sigma(x')}}$, where $P_{\sigma(x')}$ denotes the unique point in $\Sigma \cap \pi^{-1}(\sigma(x')),$ each point $x$ in~$\Delta$ has precisely~$\deg(\sigma)/\delta_{P_x}$ pre-images in $\Delta',$ and therefore each point $P$ in $\Sigma$ has precisely~$\deg(\sigma)/\delta_P$ pre-images in $\Sigma'$. Consequently the Euler characteristic of $\widetilde{H'_{\Delta'}} \cap E'$ can be written as follows:
\begin{align}
\chi_{\mathrm{top}}(\widetilde{H'_{\Delta'}} \cap E') &= \sum_{P \in \Sigma} \deg(\sigma)/\delta_P  \;  (-1)^N / \delta_P \;  [(\delta_P-1)^N + (-1)^N (N \delta_P - 1)] \nonumber \\
\label{eq sum alpha stab semihom} & = (-1)^N \deg(\sigma) \sum_{P \in \Sigma} 1 / \delta_P^2 \; [(\delta_P-1)^N + (-1)^N (N \delta_P - 1)].
\end{align}

Replacing \eqref{eq sum alpha stab semihom} in \eqref{first eq computation GK stab semihom} yields the following equality:
\begin{multline}\label{second eq computation GK stab semihom}
\sum_{n = 0}^{2(N-1)} (-1)^{n-1} \hgt_{GK}\big(\H^n(H'_{\eta'} / C'_{\eta'} ) \big)
= 1/12 \; \int_{\widetilde{H}'} (c_1 c_{N-1})(\omega^{1 \vee}_{\widetilde{H}'/C'}) \\ + (-1)^N \deg(\sigma)/12 \; \sum_{P \in \Sigma} 1 / \delta_P^2  \; [(\delta_P-1)^N + (-1)^N (N \delta_P - 1)].
\end{multline}

According to Proposition \ref{both omega iso}, the vector bundle $\omega^{1 \vee}_{\widetilde{H}'/C'}$ is isomorphic to $\upsilon^* \omega^{1 \vee}_{\widetilde{H}/C}$. Moreover the restriction of the morphism $\upsilon : \widetilde{H}' \ra \widetilde{H}$ to the dense open subset $\widetilde{H}' \setminus E' \simeq H' \setminus \Sigma'$ coincides with the morphism $$\tau_{| H' \setminus \Sigma'} : H' \setminus \Sigma' \lra H \setminus \Sigma$$ which is, by base change, a finite morphism of degree $\deg(\sigma).$ Therefore the morphism $\upsilon$ is generically finite, of degree $\deg(\sigma).$ Consequently, using the projection formula applied to $\upsilon$, we may rewrite equality \eqref{second eq computation GK stab semihom} as follows:
\begin{multline}\label{third eq computation GK stab semihom}
\sum_{n = 0}^{2(N-1)} (-1)^{n-1} \hgt_{GK}\big(\H^n(H'_{\eta'} / C'_{\eta'}) \big )
= \deg(\sigma)/12 \;  \int_{\widetilde{H}} (c_1 c_{N-1})(\omega^{1 \vee}_{\widetilde{H}/C})  \\ + (-1)^N \deg(\sigma)/12 \;  \sum_{P \in \Sigma} 1 / \delta_P^2 \; [(\delta_P-1)^N + (-1)^N (N \delta_P - 1)].
\end{multline}

Using Proposition \ref{c omega on H},   we may rewrite equality \eqref{third eq computation GK stab semihom} as follows: 
\begin{equation}\label{fourth eq computation GK stab semihom}
\sum_{n = 0}^{2(N-1)} (-1)^{n-1} \hgt_{GK}\big (\H^n(H'_{\eta'} / C'_{\eta'} ) \big)
= \deg(\sigma)/12 \; \int_{H} (c_1 c_{N-1})([T_{\pi_{| H}}]) + \deg(\sigma) \sum_{P \in \Sigma} u_{N,\delta_P},
\end{equation}
where $u_{N, \delta}$ is the rational number given by:
\begin{align*}
u_{N,\delta} & : = (-1)^{N-1}/12 \;  (1 - N/\delta) [(\delta-1)^N + (-1)^{N+1}] +  (-1)^N/(12 \delta^2) \; [(\delta-1)^N + (-1)^N (N \delta - 1)] \\
&\;  = (-1)^N (\delta-1) / (12 \delta^2) \;  [(N \delta - \delta^2 + 1) (\delta-1)^{N-1} + (-1)^N (\delta+1)].
\end{align*}

Using Proposition \ref{class tangent hyp semihom}, we may rewrite equality \eqref{fourth eq computation GK stab semihom} as follows:
\begin{multline}\label{fifth eq computation GK stab semihom}
\sum_{n = 0}^{2(N-1)} (-1)^{n-1} \hgt_{GK}\big (\H^n(H'_{\eta'} / C'_{\eta'}) \big )
= \deg(\sigma)/12 \; \int_X (c_1 c_N)(T_\pi) \\ + (-1)^N \deg(\sigma)/12 \; \bigg(\int_X (1 - c_1(L))^{-1} c_1(\Omega^1_{X/C}) c(\Omega^1_{X/C}) - \int_X c_1(L) c_N(\Omega^1_{X/C}) \bigg) \\
+ \deg(\sigma) \sum_{P \in \Sigma} v_{N,\delta_P},
\end{multline}
where $v_{N,\delta}$ is the rational number given by:
\begin{align}
v_{N,\delta} &:= u_{N,\delta} + (-1)^N (\delta-1)^N/12 \nonumber \\
\label{eq v w semihom} &= (-1)^N w_{N,\delta}.
\end{align}

\subsubsection{} Using again Theorem \ref{GK in terms of classes in D non-reduced}, this time applied to the smooth morphism $\pi' : X' \ra C',$ yields the following equality:
\begin{equation}\label{eq GK smooth semihom}
 \sum_{n = 0}^{2N} (-1)^{n-1} \hgt_{GK}\big(\H^n(X'/C')\big) = 1/12 \;  \int_{X'} (c_1 c_{N})(T_{\pi'}).
 \end{equation}

Moreover, using that by base change the line bundle $L'$ on $X'$ is ample relatively to the morphism~$\pi',$ since $L$ is ample relatively to $\pi$, Proposition \ref{GK Lefschetz semihom} implies the following equality:
\begin{multline}\label{comp alt sum GK stab semihom}
 \sum_{n=0}^{2(N-1)} (-1)^{n-1} \hgt_{GK}\big(\H^n(H'_{\eta'} / C'_{\eta'})\big) - \sum_{n=0}^{2N} (-1)^{n-1} \hgt_{GK}\big(\H^n(X'/C')\big) \\ = (-1)^N \Big[\hgt_{GK}\big(\H^{N-1}(H'_{\eta'} / C'_{\eta'})\big) - \hgt_{GK}\big(\H^{N-1}(X'/C')\big)
+ \hgt_{GK}\big(\H^N(X'/C')\big) \\ 
 - \hgt_{GK}\big(\H^{N+1}(X'/C')\big)\Big].
\end{multline}

Combining equalities \eqref{fifth eq computation GK stab semihom}, \eqref{eq v w semihom}, \eqref{eq GK smooth semihom}, and \eqref{comp alt sum GK stab semihom} yields the following equalities:
\begin{align}
\hgt_{GK}\big(\H^{N-1}(H'_{\eta'} / C'_{\eta'})\big)
& = \hgt_{GK}\big(\H^{N-1}(X'/C')\big) - \hgt_{GK}\big(\H^N(X'/C')\big) + \hgt_{GK}\big(\H^{N+1}(X'/C')\big)
 \nonumber \\
 & \quad - (-1)^N/12 \; \int_{X'} (c_1 c_{N-1})(T_{\pi'})
 + (-1)^N \deg(\sigma)/12 \; \int_X (c_1 c_N)(T_\pi) 
\nonumber \\ 
& \quad + \deg(\sigma)/12 \; \bigg(\int_X (1 - c_1(L))^{-1} c_1(\Omega^1_{X/C}) c(\Omega^1_{X/C}) - \int_X c_1(L) c_N(\Omega^1_{X/C})\bigg)
\nonumber \\ 
& \quad  + \deg(\sigma) \sum_{P \in \Sigma} w_{N,\delta_P}
\nonumber \\ 
 &= \deg(\sigma) \Big[ \hgt_{GK}\big(\H^{N-1}(X/C)\big) -  \hgt_{GK}\big(\H^N(X'/C')\big) 
 + \hgt_{GK}\big(\H^{N+1}(X/C)\big)\Big] \nonumber \\
 & \quad  
+ \deg(\sigma)/12 \; \bigg(\int_X (1 - c_1(L))^{-1} c_1(\Omega^1_{X/C}) c(\Omega^1_{X/C})  - \int_X c_1(L) c_N(\Omega^1_{X/C}) \bigg)
\nonumber \\
 & \quad  
+ \deg(\sigma) \sum_{P \in \Sigma} w_{N,\delta_P}, \label{sixth eq computation GK stab semihom}
\end{align}
where in \eqref{sixth eq computation GK stab semihom} we  used that $\pi' : X' \ra C'$ is the base change of $\pi : X \ra C$ by $\sigma: C' \ra C.$

Since the local monodromy of the variation of Hodge structures $\H^{N-1}(H'_{\eta'} / C'_{\eta'})$ is unipotent, the stable Griffiths height $\hgt_{GK,stab}\big(\H^{N-1}(H_\eta/C_\eta)\big)$ is by definition given by:
$$\hgt_{GK,stab}\big(\H^{N-1}(H_\eta/C_\eta)\big) = 1/\deg(\sigma)\;  \hgt_{GK}\big(\H^{N-1}(H'_{\eta'} / C'_{\eta'})\big).$$

Combining this relation and equality \eqref{sixth eq computation GK stab semihom} concludes the proof of equality \eqref{eq: GK semi hom in X} and of Theorem~\ref{GK semi hom in X}.

\section{Proof of Proposition \ref{both omega iso}}\label{Proof both omega}

 The existence of $\upsilon$ such that \eqref{diag ups} is commutative will follow from the fact that $(\tau_{| H'} \circ \nu')^{\ast}(\Sigma)$ is a Cartier divisor and from the universal property of blow-ups, and its unicity is clear. 

The assertions that  $(\tau_{| H'} \circ \nu')^{\ast}(\Sigma)$ and $E'$ coincide and that~$\phi$ extends to an isomorphism~$\widetilde{\phi}$ between ~$\upsilon^* \omega^1_{\widetilde{H}/C}$ and~$\omega^1_{\widetilde{H}'/C'}$
 are local on $\widetilde{H}'$, and clearly satisfied outside of the closed sub\-scheme~$E':=\nu'^{\ast}(\Sigma').$ 
 
 Consequently we may choose a complex point 
 $Q'$ in $E'$, and work locally near $Q'$ to establish these assertions.

We shall denote by $P'$ the image of $Q'$ by $\nu'$ and by $P$ the image of $P'$ by $\tau$:
$$P' := \nu'(Q') \in \Sigma'  \quad \mbox{ and } \quad P := \tau(P') \in \Sigma.$$ For simplicity's sake, we shall write $\delta$ for $\delta_P.$

\subsection{Coordinate systems on $X$ and $X'$} \label{coord on X and X'}

\subsubsection{Notation}
As in Subsection \ref{local desc} and Proposition \ref{Local H} above, we denote by
$t$ a local coordinate of $C$ near $\pi(P)$, and we complete its pullback $\pi^* t$  into a local coordinate system $(\pi^* t, x_1,\dots,x_N)$ of $X$ near $P$.  

We still denote by $F_\delta(T_1, \dots, T_N)$  an equation in $\C[T_1, \dots, T_N]$ of the projective tangent cone $\PP(C_P H_{\pi(P)})$, seen as a projective hypersurface in $\PP(T_P H)$ that we identify with $\PP^{N-1}_\C$ through the (differential at $P$ of the) coordinate system $(x_{1 | H},\dots,x_{N | H})$. It is a  homogeneous  polynomial of  degree $\delta$, with non-zero discriminant. According to Proposition \ref{Local H}, we may assume that the germ  at $P$ of the divisor $H$ in $X$ is 
 defined by the equation:
\begin{equation} \label{eq H in X F delta}
 \pi^* t = F_\delta(x_1,\dots,x_N) + F_{> \delta},
 \end{equation}
 for some
 $F_{>\delta_P}$ in $\mathfrak{m}_{X,P}^{\delta_P +1}$.
 
Let $t'$ be a local coordinate  of $C'$ near $\pi'(P')$.  As the finite covering $\sigma : C' \ra C$ is ramified with index $\delta$ at $\pi'(P')$, there exists a 
unit $u$ in $\cO_{C', \pi'(P')}$ such that the following relation holds:
\begin{equation*}
\sigma^* t = u \, t'^\delta.
\end{equation*}
 Consequently the smooth complex scheme $X'$  admits  $(\pi'^* t', x'_1,\dots,x'_N)$ as local coordinate system near $P',$ where for every integer $i$ such that
$1 \leq i \leq N$:
$$ x'_i := \tau^* x_i.$$
Moreover the divisor $H'$ in $X'$ is defined by the following equation: 
\begin{equation}\label{eq H' in X' F delta}
 \pi'^*\!u \, \pi'^* t'^\delta = F_\delta(x'_1,\dots,x'_N) + \tau^* F_{> \delta}.
 \end{equation}

\subsubsection{Description of the blow-ups $\widetilde{X}'$ and  $\widetilde{X}$} As the $X'$-scheme $\widetilde{X}'$ (resp. the $X$-scheme $\widetilde{X}$) is locally the blow-up of $P'$ (resp. $P$) in $X'$ (resp. $X$), it can locally\footnote{Namely on some open neighborhood of $Z'_{P'} := \chi'^{-1} (P')$ (resp. of $Z_{P} := \chi^{-1} (P)$).} be identified with the closed subscheme of $X' \times \PP^N_\C$ (resp. $X \times \PP^N_\C$) defined by the following equations, where $(v'_0 : v'_1 : \dots : v'_N)$ (resp. $(v_0 : v_1 : \dots : v_N)$)  denote 
homogeneous coordinates on $\PP^N_\C$: 
$$v'_0\,  \chi'^* x'_i = \chi'^* \pi'^* t' \, v'_i  \quad \mbox{(resp. $v_0 \, \chi^* x_i = \chi^* \pi^* t \,  v_i$)}$$
for every integer $i \in \{1, \dots, N\}$, and: 
$$v'_i \, \chi'^* x'_j = \chi'^* x'_i \, v'_j \quad \mbox{(resp. $v_i \, \chi^* x_j = \chi^* x_i \, v_j$)}$$
for every pair of integers $(i,j)\in  \{1, \dots, N\}^2$ such that  $i \neq j$.

In particular, an open neighborhood of $Z'_{P'} := \chi'^{-1} (P')$ (resp. of $Z_{P} := \chi^{-1} (P)$)) in the complex scheme $\widetilde{X}'$ (resp.  $\widetilde{X}$) admits a 
 covering by the open subsets $V'_0,\dots,V'_N$ (resp. $V_0,\dots,V_N$) defined 
 by the non-vanishing of $v'_0,\dots,v'_N$ (resp. of $v_0,\dots,v_N$).

\subsection{Description of $\omega^1_{\widetilde{H}/C | V_1}$} \label{subsec desc omega H tilde C V_1}
For future use, let us describe the open subset $V_1$ of $\widetilde{X}$. This open subset admits as coordinate system\footnote{In other words, the morphism $(\widetilde{\pi^* t}_1, \chi^* x_1, \widetilde{x}_{2,1},\dots,\widetilde{x}_{N,1}): V_1 \ra \A_\C^{N+1}$ is \'etale.}:
$$(\widetilde{\pi^* t}_1, \chi^* x_1, \widetilde{x}_{2,1},\dots,\widetilde{x}_{N,1})$$ where $\widetilde{\pi^* t}_1$ is defined by:
$$\widetilde{\pi^* t}_1 := v_0/v_1$$
and satisfies the following equality:
\begin{equation}\label{t in V_1}
 \chi^* \pi^* t = \widetilde{\pi^* t}_1 \, \chi^* x_1,
 \end{equation}
and where for every integer $i \in \{2, \dots, N\}$, $\widetilde{x}_{i,1}$ is defined by: 
$$\widetilde{x}_{i,1}:= v_i/v_1$$
and satisfies the following equality:
\begin{equation}\label{x_i in V_1}
 \chi^* x_i = \widetilde{x}_{i,1} \,  \chi^* x_1.
 \end{equation}

Let us describe the divisor $\widetilde{H} \cap V_1$ in $V_1$. Using firstly the equation \eqref{eq H in X F delta} of the divisor $H$ in $X$ and equalities \eqref{t in V_1} and \eqref{x_i in V_1}, and then the homogeneity of $F_\delta$, the divisor $\chi^* H \cap V_1$ in $V_1$ is defined by the following equation:
\begin{align*}
\widetilde{\pi^* t}_1  \,  \chi^* x_1
& = F_\delta(\chi^* x_1,\widetilde{x}_{2,1}  \chi^* x_1, \dots,\widetilde{x}_{N,1}  \chi^* x_1) + \chi^\ast F_{> \delta} \\
& = \chi^* x_1^{\delta} \, \big[F_\delta(1, \widetilde{x}_{2,1},\dots,\widetilde{x}_{N,1}) + \chi^* x_1   \, h_1\big],
\end{align*}
where $h_1$ is  defined by:
$$h_1 := \chi^* x_1^{-\delta-1} \, \, \chi^\ast F_{> \delta},$$ 
and actually is a (germ of) regular function on $V_1$ along  $Z_{P} \cap V_1$, since $F_{> \delta}$ belongs
to the ideal~$\mathfrak{m}_{X,P}^{\delta +1}$ of~$\cO_{X,P}$, and therefore the pull-back $\chi^\ast F_{> \delta}$ vanishes at order at least $\delta+ 1$ on the exceptional divisor~$Z_{P}$.

For convenience's sake, we define the following function on $V_1$:
$$ w := F_\delta(1, \widetilde{x}_{2,1},\dots,\widetilde{x}_{N,1}) + \chi^* x_1 \,  h_1.$$

Consequently the divisor $\widetilde{H} \cap V_1 = (\chi^* H - Z) \cap V_1$ in $V_1$ is defined by the following equation:
\begin{equation}\label{H tilde in V_1}
 \widetilde{\pi^* t}_1 = \chi^* x_1^{\delta-1} \,  w,
 \end{equation}
and admits  as coordinate system the restriction of $(\chi^* x_1, \widetilde{x}_{2,1},\dots,\widetilde{x}_{N,1})$ to $\widetilde{H} \cap V_1$.

Moreover, using equation \eqref{t in V_1}, the divisor $\widetilde{H}_\Delta \cap V_1$ in $\widetilde{H} \cap V_1$ is defined by the following equation:
$$\big(\widetilde{\pi^* t}_1  \,  \chi^* x_1\big)_{\mid  \widetilde{H} \cap V_1} = 0,$$
or equivalently:
\begin{equation}\label{H tilde Delta in V_1} 
 \big(  \chi^* x_1^{\delta} \,  w \big)_{\mid  \widetilde{H} \cap V_1}= 0.
\end{equation}
Since the homogeneous polynomial $F_\delta$ has non-zero discriminant, for every point $Q$ of $E_P \cap V_1 = Z_P \cap \widetilde{H} \cap V_1,$ there exists an integer $i(Q)$ in $\{2,\dots,N\}$ such that the derivative
$$\partial_{\widetilde{x}_{i(Q),1}} F_\delta(1, \widetilde{x}_{2,1},\dots,\widetilde{x}_{N,1})$$
does not vanish at $Q$. Moreover, since $\chi^* x_1$ vanishes at $Q$ because $Q$ is in $Z_P$, the partial derivative~$\partial_{\widetilde{x}_{i(Q),1}} w$
also does not vanish.

Consequently, in some neighborhood of $Q$, the complex scheme $\widetilde{H} \cap V_1$ admits as coordinate system the restriction to $\widetilde{H} \cap V_1$  of:
$$(\chi^* x_1, \widetilde{x}_{2,1},\dots, {\widetilde{x}_{i(Q)-1,1}}, w,{\widetilde{x}_{i(Q)+1,1}}, \dots, \widetilde{x}_{N,1}).$$

This coordinate system is well-suited to the equation \eqref{H tilde Delta in V_1} of the divisor with normal crossings~$\widetilde{H}_\Delta \cap V_1$ in~$\widetilde{H} \cap V_1,$ and the locally free coherent sheaf $\omega^1_{\widetilde{H}/C}$ of rank $N-1$ on $\widetilde{H} \cap V_1$ admits the following set of $N$ local generators near $Q$:
\begin{multline*}\big(\chi_{\mid \widetilde{H} \cap V_1}^* [dx_1/x_1], \big[(d w/w)_{\mid\widetilde{H} \cap V_1}\big], \big[d \widetilde{x}_{2,1\mid\widetilde{H} \cap V_1}\big],
 \dots, \\ \big[d \widetilde{x}_{i(Q)-1,1\mid\widetilde{H} \cap V_1}\big], \big[d \widetilde{x}_{i(Q)+1,1\mid\widetilde{H} \cap V_1}\big],\dots,\big[d \widetilde{x}_{N,1\mid\widetilde{H} \cap V_1}\big]\big),
\end{multline*}
satisfying the following relation, which follows from \eqref{t in V_1} and \eqref{H tilde in V_1}:
$$0 = \chi_{\mid\widetilde{H} \cap V_1}^* \pi^* [dt/t] = \delta \chi_{\mid\widetilde{H} \cap V_1}^* [d x_1/x_1] + \big[(dw/w)_{\mid\widetilde{H} \cap V_1}\big].$$ 

In particular, the vector bundle $\omega^1_{\widetilde{H}/C}$ admits the following local frame near $Q$:
\begin{equation}\label{frame omega H tilde C}
\big(\chi_{\mid \widetilde{H} \cap V_1}^* [dx_1/x_1], \big[d \widetilde{x}_{2,1\mid \widetilde{H} \cap V_1}\big],\dots, \big[d \widetilde{x}_{i(Q)-1,1\mid \widetilde{H} \cap V_1}\big], \big[d \widetilde{x}_{i(Q)+1,1\mid \widetilde{H} \cap V_1}\big],\dots,\big[d \widetilde{x}_{N,1\mid \widetilde{H} \cap V_1}\big]\big).
\end{equation}

The remainder of the proof of Proposition \ref{both omega iso} shall be divided into two cases, according to the location of the point $Q'$ of  $\widetilde{H}'$ relatively to the covering $V'_0,\dots,V'_N$. 
 
In both cases, we shall 
give a local equation of the divisor $\widetilde{H}'$ in $\widetilde{X}'$ 
(in terms of the coordinate systems introduced in \ref{coord on X and X'} above) and  establish the existence of 
 the morphism $\upsilon: \widetilde{H}' \ra \widetilde{H}$.
Then we shall give a local frame near $Q'$ of the locally free $\cO_{\widetilde{H}'}$-module $\omega^1_{\widetilde{H}'/C'}$. Comparing this local frame with the pullback by $\upsilon$ of the local frame \eqref{frame omega H tilde C} will finally allow us to  conclude that~$\omega^1_{\widetilde{H}'/C'}$
and~$\upsilon^*\omega^1_{\widetilde{H}/C}$ are naturally isomorphic.

\subsection{Case 1: The point $Q'$ belongs to  the open subset $V'_0$ of $\widetilde{X}'$ }

\subsubsection{}
The open subset $V'_0$ admits as coordinate system:
 $$(\chi'^*\pi'^* t', \widetilde{x}'_{1,0},\dots,\widetilde{x}'_{N,0}),$$ where for every integer $i\in \{1,\dots, N\}$, $\widetilde{x}'_{i,0}$ is defined by:
 $$\widetilde{x}'_{i,0} := v'_i / v'_0,$$
and satisfies the following equality of functions on $V'_0$:
\begin{equation}\label{x_i in V'_0} 
\chi'^* x'_i = \widetilde{x}'_{i,0} \,  \chi'^* \pi'^* t'.
\end{equation}

Let us describe the divisor $\widetilde{H}' \cap V'_0$ in $V'_0$.
Using firstly the equation \eqref{eq H' in X' F delta} of the divisor $H'$ in~$X'$ and equality \eqref{x_i in V'_0}, then the homogeneity of $F_\delta$, the divisor $\chi'^* H' \cap V'_0$ in $V'_0$ is defined by the following equation:
\begin{align*}
\chi'^* \pi'^*(u\,  t'^\delta) &= F_\delta(\widetilde{x}'_{1,0} \, \chi'^* \pi'^* t',\dots,\widetilde{x}'_{N,0} \,  \chi'^* \pi'^* t') + \chi'^* \tau^* F_{> \delta} \\
&= \chi'^* \pi'^* t'^{\delta}  \big[F_\delta(\widetilde{x}'_{1,0},\dots,\widetilde{x}'_{N,0}) + \chi'^* \pi'^* t' \, 
h'_0 \big],
\end{align*}
where $h'_0$ is  defined by:
$$h'_0 := \chi'^* \pi'^* t'^{-\delta-1} \, \, \chi'^\ast \tau^{\ast} F_{> \delta},$$ 
and, similarly to $h_1$, is actually  a (germ of) regular function on $V'_0$ along  $Z'_{P'} \cap V'_0$, since 
the pull-back~$\chi'^\ast \tau^* F_{> \delta}$ vanishes at order at least $\delta+ 1$ on the exceptional divisor~$Z'_{P'}$.

Consequently the divisor $\widetilde{H}' \cap V'_0 = (\chi'^* H' - Z'_{\delta}) \cap V'_0$ in $V'_0$ is defined by the following equation:
\begin{equation}\label{H' tilde in V'_0}
\chi'^* \pi'^* u = F_\delta(\widetilde{x}'_{1,0},\dots,\widetilde{x}'_{N,0}) + \chi'^* \pi'^* t' \, h'_0.
\end{equation}

By hypothesis, $Q'$ is in $\widetilde{H}'_{\Delta'}$, so that this equation is satisfied at $Q'$ and  $\chi'^* \pi'^* t'$ vanishes at $Q'.$ Consequently there exists an integer $i$ such that $1 \leq i \leq N$ and $\widetilde{x}'_{i,0}$ does not vanish at $Q'.$ 

Without loss of generality, we may assume that $i = 1$. Let $W \subset V'_0$ be the open subset~$(\widetilde{x}'_{1,0} \neq 0)$, which contains $Q'$.

The restriction to $W$ of the morphism $\tau \circ \chi' : \widetilde{X}' \ra X$ is described by the following equalities of functions on $W$:
$$(\tau  \circ \chi')^* \pi^* t = \chi'^* \pi'^* (u  t'^{\delta}),$$ 
and, for every integer $i \in \{1, \dots, N\}$: 
$$(\tau \circ \chi')^* x_i = \chi'^* x'_i = \widetilde{x}'_{i,0}\, \chi'^*\pi'^* t'.$$
These  equalities, along with the non-vanishing of $\widetilde{x}'_{1,0}$ on $W$, imply that  the subscheme $(\tau \circ \chi')^{-1}(\Sigma)\cap W$ of $ W$ is defined by the vanishing of $\chi'^* \pi'^* t',$ and is therefore the exceptional divisor $Z'_{P'} \cap W$.

Along with the universal property of blow-ups, this shows that there exists a unique morphism\footnote{The notation $\upsilon_{\mid W}$ is slightly abusive, since there is no morphism from $\widetilde{X}'$ to $\widetilde{X}$ that would fit in the top line of the diagram \eqref{diag ups W}.}~$\upsilon_{\mid W}: W \ra \widetilde{X}$ such that the following diagram is commutative: 
\begin{equation}\label{diag ups W}
\xymatrix{
W \ar[r]^{\upsilon_{| W}} \ar[d]_{\chi'_{| W}} & \widetilde{X}\ar[d]^{\chi} \\
X' \ar[r]_{\tau} & X .
}
\end{equation}

Moreover
 $\upsilon_{\mid W}$ satisfies the following equality 
of morphisms from $W$ to $\PP^N$:
$$\upsilon_{\mid  W}^* [v_0 : \dots : v_N] = 
[\chi'^* \pi'^*(u \, t'^{\delta-1}) : \widetilde{x}'_{1,0} : \dots : \widetilde{x}'_{N,0}] .$$

In particular, the morphism $\upsilon_{\mid  W}$ has values in the open subset $V_1 := (v_1 \neq 0)$ of  $\widetilde{X}$, and its description in the coordinate system $(\widetilde{\pi^* t}_1, \chi^* x_1, \widetilde{x}_{2,1},\dots,\widetilde{x}_{N,1})$ of $V_1$ defined above is given by the following equalities of functions on $W$:
\begin{equation}\label{ups on V'_0 t bis} 
\upsilon_{\mid  W}^* \widetilde{\pi^* t}_1 = \chi'^* \pi'^* (u \, t'^{\delta-1}) \, \widetilde{x}'^{-1}_{1,0},
\end{equation}
\begin{equation}\label{ups on V'_0 x_1 bis} 
\upsilon_{\mid  W}^* \chi^* x_1 = \widetilde{x}'_{1,0}\,  \chi'^*\pi'^* t',
\end{equation}
and, for every integer $i\in \{2, \dots, N\}$:
\begin{equation}\label{ups on V'_0 x_i bis} 
\upsilon_{\mid  W}^* \widetilde{x}_{i,1} = \widetilde{x}'_{i,0} \,  \widetilde{x}'^{-1}_{1,0}.
\end{equation}

Since  the subscheme $(\tau \circ \chi')^{-1} (\Sigma)\cap W$ of $W$ coincides with the exceptional divisor $Z'_{P'} \cap W$, we obtain by restriction that the subscheme
 $(\tau_{\mid H'} \circ \nu')^{-1} (\Sigma)\cap W$ of $\widetilde{H}' \cap W$ coincides with the divisor~$E'_{P'} \cap W$. Along with the existence of a morphism $\upsilon_{\mid W}$ such that the diagram  \eqref{diag ups W} is commutative, this establishes the 
first and second assertions of Proposition \ref{both omega iso} in Case 1. 

Moreover equalities \eqref{ups on V'_0 t bis}, \eqref{ups on V'_0 x_1 bis}, and \eqref{ups on V'_0 x_i bis} imply by restriction the following equalities of functions on $\widetilde{H}' \cap W$:
\begin{equation}\label{ups on V'_0 t} 
\upsilon_{\mid \widetilde{H}' \cap W}^* \widetilde{\pi^* t}_1 = \big(\chi'^* \pi'^* (u \,t'^{\delta-1}) \, \widetilde{x}'^{-1}_{1,0}\big)_{\mid \widetilde{H}' \cap W},
\end{equation}
\begin{equation}\label{ups on V'_0 x_1} 
\upsilon_{\mid \widetilde{H}' \cap W}^* \chi^* x_1 = (\widetilde{x}'_{1,0}\,  \chi'^*\pi'^* t')_{\mid \widetilde{H}' \cap W},
\end{equation}
and, for every integer $i\in \{2, \dots, N\}$:
\begin{equation}\label{ups on V'_0 x_i} 
\upsilon_{\mid \widetilde{H}' \cap W}^* \widetilde{x}_{i,1} = (\widetilde{x}'_{i,0} \,  \widetilde{x}'^{-1}_{1,0})_{\mid \widetilde{H}' \cap W}.
\end{equation}

\subsubsection{} Now let us describe locally the vector bundle $\omega^1_{\widetilde{H}'/C'}$ near $Q'.$
Similarly to the function~$w$ on~$V_1$, a function $w'_0$ on $W$ may be defined as follows:
\begin{align*}
w'_0  &:= F_\delta(1, \widetilde{x}'_{2,0} \, \widetilde{x}'^{-1}_{1,0}, \dots, \widetilde{x}'_{N,0} \, \widetilde{x}'^{-1}_{1,0}) 
 + \chi'^* \pi'^* t' \widetilde{x}'^{-\delta}_{1,0} h'_0 \\
&\,\,= \widetilde{x}'^{-\delta}_{1,0} \, [F_\delta(\widetilde{x}'_{1,0},\dots,\widetilde{x}'_{N,0}) + \chi'^* \pi'^* t' \, h'_0],
\end{align*}
where the second equality follows from the homogeneity of $F_\delta.$

Since the homogeneous polynomial $F_\delta$ has non-zero discriminant, there exists an integer $j$ in $\{2,\dots,N\}$ such that the derivative
$$\partial_{\widetilde{x}'_{j,0}} F_\delta(1, \widetilde{x}'_{2,0} \,  \widetilde{x}'^{-1}_{1,0}, \dots, \widetilde{x}'_{N,0} \, \widetilde{x}'^{-1}_{1,0})$$
does not vanish at $Q'$. Without loss of generality, we may assume that $j = 2$. Moreover, since $\chi'^* \pi'^* t'$ vanishes at $Q'$, the partial derivative
$\partial_{\widetilde{x}'_{2,0}} w'_0$
also does not vanish.
Consequently, after possibly shrinking the open neighborhood $W$ of $Q'$ in $\widetilde{X}'$, this neighborhood admits as coordinate system: 
$$(\chi'^* \pi'^* t', \widetilde{x}'_{1,0},w'_0,\widetilde{x}'_{3,0}\dots,\widetilde{x}'_{N,0}).$$

In this coordinate system, the equation \eqref{H' tilde in V'_0} defining the divisor $\widetilde{H}' \cap W$ in $W$ can be rewritten as follows:
$$ \chi'^* \pi'^* u = \widetilde{x}'^{\delta}_{1,0} \,  w'_0,$$
or equivalently: 
\begin{equation}\label{H' tilde in V'_0 with w}
 w'_0 = \widetilde{x}'^{-\delta}_{1,0} \chi'^* \pi'^* u,
 \end{equation}
and therefore $\widetilde{H}' \cap W$ admits as coordinate system the restriction to $ \widetilde{H}' \cap W$ of:
$$(\chi'^* \pi'^* t', \widetilde{x}'_{1,0},\widetilde{x}'_{3,0}\dots,\widetilde{x}'_{N,0}).$$
Moreover the divisor $\widetilde{H}'_{\Delta'} \cap W$ in $\widetilde{H}' \cap W$ is defined by the following equation:
$$(\chi'^* \pi'^* t')_{\mid\widetilde{H}' \cap W} = 0,$$
and is therefore non-singular.

Consequently the vector bundle $$\omega^1_{\widetilde{H}'/C' | \widetilde{H}' \cap W} \simeq \Omega^1_{\widetilde{H}'/C' | \widetilde{H}' \cap W}$$ of rank $N-1$ admits the following local frame:
\begin{equation}\label{frame omega H' tilde C' V'_0}
 \big(\big[d \widetilde{x}'_{1,0\mid\widetilde{H}' \cap W }\big], \big[d \widetilde{x}'_{3,0\mid\widetilde{H}' \cap W}\big], \dots, \big[d \widetilde{x}'_{N,0\mid\widetilde{H}' \cap W}\big]\big).
\end{equation}

\subsubsection{} Now let us describe the vector bundle $\upsilon^* \omega^1_{\widetilde{H}/C}$ near $Q'.$ 

Using successively the definition of the function $h_1$ on $V_1$, the commutativity of the diagram \eqref{diag ups W}, the equality \eqref{x_i in V'_0} with $i:=1$ and the definition of the function $h'_0$ on $V'_0$, one easily obtains the following equalities of functions on $W$:
\begin{align}
\upsilon_{| W}^* h_1 &= \upsilon_{| W}^* (\chi^* x_1^{-\delta-1} \, \chi^* F_{> \delta})  \nonumber \\
& = \chi'^* (x'^{-\delta-1}_1 \, \tau^* F_{> \delta} ) \nonumber \\
& = (\widetilde{x}'_{1,0} \,  \chi'^* \pi'^* t')^{-\delta-1} \, (\chi'^* \pi'^* t'^{\delta+1} \, h'_0) \nonumber \\
& = \widetilde{x}'^{-\delta-1}_{1,0} \, h'_0 \,  \label{h_1 on W}
\end{align}

Consequently, using first the definition of the function $w$ on $V_1$, then equalities \eqref{ups on V'_0 x_1 bis}, \eqref{ups on V'_0 x_i bis} and \eqref{h_1 on W}, and finally the definition of the function $w'_0$ on $W$, one obtains the following equalities of functions on $W$:
\begin{align}
\upsilon_{| W}^* w  & = \upsilon_{| W}^* F_\delta(1, \widetilde{x}_{2,1},\dots,\widetilde{x}_{N,1}) + \upsilon_{| W}^* (\chi^* x_1 \, h_1) \nonumber \\
& = F_\delta(1, \widetilde{x}'_{2,0} \, \widetilde{x}'^{-1}_{1,0}, \dots, \widetilde{x}'_{N,0} \, \widetilde{x}'^{-1}_{1,0})
 + (\widetilde{x}'_{1,0} \, \chi'^* \pi'^* t') \, (\widetilde{x}'^{-\delta-1}_{1,0} h'_0) \nonumber \\
\label{both w on V'_0} & = w'_0.
\end{align}

Using firstly equality \eqref{both w on V'_0}, then the chain rule applied to the coordinate system on $V_1$: 
$$(\widetilde{\pi^* t}_1, \chi^* x_1, \widetilde{x}_{2,1},\dots,\widetilde{x}_{N,1}),$$
  then equalities \eqref{ups on V'_0 t bis}, \eqref{ups on V'_0 x_1 bis} and \eqref{ups on V'_0 x_i bis}, 
  we obtain the following expression for the partial derivative~$\partial_{\widetilde{x}'_{2,0}} w'_0$ in the local coordinate system $(\chi'^*\pi'^* t', \widetilde{x}'_{1,0},\dots,\widetilde{x}'_{N,0})$ on $W$: 
\begin{align*}
\partial_{\widetilde{x}'_{2,0}} w'_0 &= \partial_{\widetilde{x}'_{2,0}} (\upsilon_{| W}^* w) \\
& = \partial_{\widetilde{x}'_{2,0}} (\upsilon_{| W}^* \widetilde{\pi^* t}_1) \, \upsilon_{| W}^* \partial_{\widetilde{\pi^* t}_1} w
 + \partial_{\widetilde{x}'_{2,0}} (\upsilon_{| W}^* \chi^* x_1) \, \upsilon_{| W}^* \partial_{\chi^* x_1} w  \\
& \quad + \sum_{j =2}^N \partial_{\widetilde{x}'_{2,0}} (\upsilon_{| W}^* \widetilde{x}_{j,1}) \, \upsilon_{| W}^* \partial_{\widetilde{x}_{j,1}}w \\
& = \partial_{\widetilde{x}'_{2,0}} (\widetilde{x}'_{2,0} \,  \widetilde{x}'^{-1}_{1,0}) \, \upsilon_{\mid W}^* \partial_{\widetilde{x}_{2,1}} w \\
& = \widetilde{x}'^{-1}_{1,0} \, \upsilon_{\mid W}^* \partial_{\widetilde{x}_{2,1}} w.
\end{align*}
Consequently, since $\partial_{\widetilde{x}'_{2,0}} w'_0$ does not vanish at $Q'$, we obtain that $\partial_{\widetilde{x}_{2,1}} w$ does not vanish at $\upsilon(Q')$.

We can therefore apply the results of Subsection \ref{subsec desc omega H tilde C V_1} near $Q := \upsilon(Q')$ with $i(Q) := 2$. This allows us to describe a local frame of the vector bundle $\upsilon_{| \widetilde{H}' \cap W}^* \omega^1_{\widetilde{H}/C},$ given by pulling back by $\upsilon_{| \widetilde{H}' \cap W}$ the local frame \eqref{frame omega H tilde C} of the vector bundle $\omega^1_{\widetilde{H}/C | V_1}$; its components are:
$$\upsilon_{| \widetilde{H}' \cap W}^* \chi^* [dx_1/x_1] = \widetilde{x}'^{-1}_{1,0| \widetilde{H}' \cap W} \big[d \widetilde{x}'_{1,0| \widetilde{H}' \cap W}\big] + \chi'^*_{| \widetilde{H}' \cap W}  \pi'^* [d t' / t'],$$
and for every $j$ in $\{3,\dots,N\}$:
$$[\upsilon_{| \widetilde{H}' \cap W}^* d \widetilde{x}_{j,1}] = [d (\widetilde{x}'_{j,0} \, \widetilde{x}'^{-1}_{1,0} )_{| \widetilde{H}' \cap W} ]
= - \widetilde{x}'_{j,0| \widetilde{H}' \cap W}  \,\widetilde{x}'^{-2}_{1,0| \widetilde{H}' \cap W} \, [d \widetilde{x}'_{1,0| \widetilde{H}' \cap W}] + \widetilde{x}'^{-1}_{1,0| \widetilde{H}' \cap W} \, [d \widetilde{x}'_{j,0 | \widetilde{H}' \cap W}].$$
 
The vanishing of the following class in $\upsilon_{| \widetilde{H}' \cap W}^* \omega^1_{\widetilde{H}/C}$:
$$\upsilon_{| \widetilde{H}' \cap W}^* \chi^* \pi^* [dt/t] = \chi'^*_{| \widetilde{H}' \cap W} \pi'^* ([du/u] + \delta [dt'/t']),$$ 
together with the invertibility of $u$, implies the vanishing of the class $\chi'^*_{| \widetilde{H}' \cap W} \pi'^* [dt'/t'].$  Using also the invertibility of $\widetilde{x}'_{1,0}$ on $W$, this shows that:
$$\big(\big[d \widetilde{x}'_{1,0 | \widetilde{H}' \cap W}\big], \big[d \widetilde{x}'_{3,0| \widetilde{H}' \cap W}\big], \dots, \big[d \widetilde{x}'_{N,0| \widetilde{H}' \cap W}\big]\big) $$ constitutes a frame of the vector bundle  $\upsilon_{| \widetilde{H}' \cap W}^* \omega^1_{\widetilde{H}/C}$.   

Comparing this frame with the frame \eqref{frame omega H' tilde C' V'_0} for the vector bundle $\omega^1_{\widetilde{H}' / C' | \widetilde{H}' \cap W}$ proves   that, over~$\widetilde{H}' \cap W$, the isomorphism $\phi$ indeed extends to an isomorphism $\widetilde{\phi}$ between the vector bundles~$\upsilon^* \omega^1_{\widetilde{H}/C}$ and $\omega^1_{\widetilde{H}'/C'}$.
This concludes the proof of Proposition \ref{both omega iso} in Case 1.

\subsection{Case 2: The point $Q'$ belongs to the open subset $V'_i$ for some $i \in \{1, \dots, N\}$} 

\subsubsection{} Without loss of generality, we may assume that $i = 1$. The point $Q'$ is then in the open subset~$V'_1,$ which admits a local coordinate system $(\widetilde{\pi'^* t'}_1, \chi'^* x'_1, \widetilde{x}'_{2,1},\dots,\widetilde{x}'_{N,1}),$ where $\widetilde{\pi'^* t'}_1$ is defined by:
$$\widetilde{\pi'^* t'}_1 := v'_0/v'_1,$$
and satisfies the following equality of functions on $V'_1$:
\begin{equation}\label{t' in V'_1}
 \chi'^* \pi'^* t' = \widetilde{\pi'^* t'}_1 \, \chi'^* x'_1,
 \end{equation}
and where for every integer $i \in \{ 2, \dots, N\},$ $\widetilde{x}'_{i,1}$ is defined by:
$$\widetilde{x}'_{i,1} := v'_i / v'_1$$
and satisfies the following equality:
\begin{equation}\label{x_i in V'_1} 
\chi'^* x'_i = \widetilde{x}'_{i,1} \, \chi'^* x'_1.
\end{equation}

One easily checks that the intersection $(\tau \circ \chi')^{-1}(\Sigma) \cap V'_1$ is defined in $ V'_1$ by the equation $(\chi'^* x'_1 = 0),$ so that it coincides with the exceptional divisor $Z'_{P'} \cap V'_1$ as a subscheme of $V'_1.$  Together with the universal property of blow-ups, this shows, as in Case 1,  the existence of a unique morphism~$\upsilon_{\mid V'_1}: V'_1 \ra \widetilde{X}$  such that the following diagram is commutative: 
\begin{equation}\label{diag ups V'_1}
\xymatrix{
V'_1 \ar[r]^{\upsilon_{| V'_1}} \ar[d]_{\chi'_{| V'_1}} & \widetilde{X}\ar[d]^{\chi} \\
X' \ar[r]_{\tau} & X .
}
\end{equation}

Let us now describe  the morphism $\upsilon_{\mid V'_1}$. Using equalities \eqref{t' in V'_1} and \eqref{x_i in V'_1}, the restriction to $V_1'$ of the morphism $\tau \circ \chi'$ from $\widetilde{X}'$ to $X$ satisfies the following equalities of functions on $V'_1$:
$$(\tau \circ \chi')^* \pi^* t = \chi'^* \pi'^*(u \,  t'^{\delta}) = \chi'^* \pi'^*u \,\, \widetilde{\pi'^* t'}^{\delta} _1 \,\, \chi'^* x'^{\delta}_1,$$
$$(\tau \circ \chi')^* x_1 = \chi'^* x'_1,$$
and for every integer $i \in \{2, \dots, N\}$:
$$(\tau \circ \chi')^* x_i = \widetilde{x}'_{i,1}\, \, \chi'^* x'_1.$$

Consequently
 $\upsilon_{\mid V'_1}$ satisfies the following equality 
of morphisms from $V'_1$ to $\PP^N$:
$$\upsilon_{\mid  V'_1}^* [v_0 : \dots : v_N] = [\chi'^*\pi'^*u\, \,  \widetilde{\pi'^* t'}^{\delta}_1 \,  \, \chi'^* x'^{\delta-1}_1 : 1 : \widetilde{x}'_{2,1} : \dots : \widetilde{x}'_{N,1}].$$

In particular, this morphism has values in the open subset $V_1 := (v_1 \neq 0)$ in $\widetilde{X}$.

In the coordinate system $(\widetilde{\pi^* t}_1, \chi^* x_1, \widetilde{x}_{2,1},\dots,\widetilde{x}_{N,1})$ of $V_1$ defined above, the morphism $$\upsilon_{\mid V'_1} :  V'_1 \lra V_1$$ may be described by the following equalities:
\begin{equation}\label{ups on V'_1 t bis}
 \upsilon_{\mid V'_1}^* \widetilde{\pi^* t}_1 =\chi'^*\pi'^*u\, \,  \widetilde{\pi'^* t'}^{\delta}_1 \, \, \chi'^* x'^{\delta-1}_1,
 \end{equation}
\begin{equation}\label{ups on V'_1 x_1 bis} 
\upsilon_{\mid  V'_1}^* \chi^* x_1 = \chi'^* x'_1,
\end{equation}
and for every integer $i\in \{2, \dots, N\}$: 
\begin{equation}\label{ups on V'_1 x_i bis}
 \upsilon_{\mid  V'_1}^* \widetilde{x}_{i,1} = \widetilde{x}'_{i,1}.
 \end{equation}

We may now proceed as in Case 1. Since  the subscheme $(\tau \circ \chi')^{-1} (\Sigma)\cap V'_1$ of $V'_1$ coincides with the exceptional divisor $Z'_{P'} \cap V'_1$, we obtain by restriction that the subscheme~$(\tau_{\mid H'} \circ \nu')^{-1} (\Sigma)\cap V'_1$ of~$\widetilde{H}' \cap V'_1$ coincides with the divisor $E'_{P'} \cap V'_1$. Along with the existence of a morphism $\upsilon_{\mid V'_1}$ such that the diagram  \eqref{diag ups V'_1} is commutative, this establishes the 
first and second assertions of Proposition \ref{both omega iso} in Case~2. 

Moreover equalities \eqref{ups on V'_1 t bis}, \eqref{ups on V'_1 x_1 bis}, and \eqref{ups on V'_1 x_i bis} imply by restriction the following equalities of functions on $\widetilde{H}' \cap V'_1$: 
\begin{equation}\label{ups on V'_1 t}
 \upsilon_{\mid \widetilde{H}'\cap V'_1}^* \widetilde{\pi^* t}_1 =\big(\chi'^*\pi'^*u\, \,  \widetilde{\pi'^* t'}^{\delta}_1 \,  \, \chi'^* x'^{\delta-1}_1\big)_{\mid \widetilde{H}' \cap V'_1},
 \end{equation}
\begin{equation}\label{ups on V'_1 x_1} 
\upsilon_{\mid  \widetilde{H}'\cap V'_1}^* \chi^* x_1 = (\chi'^* x'_1)_{\mid \widetilde{H}' \cap V'_1},
\end{equation}
and for every integer $i\in \{2, \dots, N\}$: 
\begin{equation}\label{ups on V'_1 x_i}
 \upsilon_{\mid  \widetilde{H}'\cap V'_1}^* \widetilde{x}_{i,1} = \widetilde{x}'_{i,1\mid \widetilde{H}' \cap V'_1}.
 \end{equation}

\subsubsection{} Let us now describe the divisor~$\widetilde{H}' \cap V'_1$ in $V'_1$. Using first the equation \eqref{eq H' in X' F delta} of the divisor~$H'$ in~$X'$ and equalities \eqref{t' in V'_1} and \eqref{x_i in V'_1}, then the homogeneity of $F_\delta$, one obtains that the divisor~$\chi'^* H' \cap V'_1$ in $V'_1$ is defined by the following equation:
\begin{align*}
\chi'^*\pi'^*u\, \,\widetilde{\pi'^* t'}^{\delta}_1 \,  \, \chi'^* x'^{\delta}_1 & = F_\delta(\chi'^* x'_1, \widetilde{x}'_{2,1}  \, \chi'^* x'_1, \dots,\widetilde{x}'_{N,1} \, \chi'^* x'_1)  + \chi'^* \tau^* F_{> \delta} \\
& = \chi'^* x'^{\delta}_1 \, \big[F_\delta(1,\widetilde{x}'_{2,1}, \dots,\widetilde{x}'_{N,1}) + \chi'^* x'_1 \,  h'_1 \big],
\end{align*}
where $h'_1$ is  defined by:
$$h'_1 := \chi'^*  x'^{-\delta-1}_1 \,  \, \chi'^\ast \tau^\ast F_{> \delta},$$ 
and, similarly to $h'_0$, is  actually a (germ of) regular function on $V'_1$ along  $Z'_{P'} \cap V'_1$, since  the pull-back~$\chi'^\ast \tau^\ast F_{> \delta}$ vanishes at order at least $\delta+ 1$ on the exceptional divisor~$Z'_{P'}$.

Consequently the divisor $\widetilde{H}' \cap V'_1 = (\chi'^* H' - Z'_{\delta}) \cap V'_1$ in $V'_1$ is defined by the following equation:
\begin{equation}\label{H' tilde in V'_1}
\chi'^*\pi'^*u\, \, \widetilde{\pi'^* t'}_1^{\delta} = F_\delta(1,\widetilde{x}'_{2,1}, \dots,\widetilde{x}'_{N,1}) + \chi'^* x'_1 \, h'_1.
\end{equation}

Now let us describe locally the vector bundle $\omega^1_{\widetilde{H}'/C'}$ near $Q'$.

Similarly to the function $w$ on $V_1$, a  function  $w'_1$ on $V'_1$ may be defined as follows:
$$w'_1 := F_\delta(1, \widetilde{x}'_{2,1}, \dots, \widetilde{x}'_{N,1}) + \chi'^* x'_1 h'_1.$$

Since the homogeneous polynomial $F_\delta$ has non-zero discriminant, there exists an integer $j$ in $\{2,\dots,N\}$ such that the derivative:
$$\partial_{\widetilde{x}'_{j,1}} F_\delta(1, \widetilde{x}'_{2,1}, \dots, \widetilde{x}'_{N,1})$$
does not vanish at $Q'.$ Without loss of generality, we may assume that $j = 2$. Moreover, since $\chi'^* x'_1$ vanishes at $Q'$ because $Q'$ is in $E'_{P'}$, the partial derivative
$\partial_{\widetilde{x}'_{2,1}} w'_1$
also does not vanish.

Consequently some neighborhood $W$ of $Q'$ in $V'_1$ admits as coordinate system:
$$(\widetilde{\pi'^* t'}_1, \chi'^* x'_1, w'_1, \widetilde{x}'_{3,1},\dots,\widetilde{x}'_{N,1}).$$

In this coordinate system, the restriction to $W$ of the equation \eqref{H' tilde in V'_1} defining the divisor $\widetilde{H}' \cap V'_1$ in $V'_1$ can be rewritten as follows:
\begin{equation} \label{H' tilde in V'_1 with w}
 \chi'^*\pi'^*u\, \, \widetilde{\pi'^* t'}_1^{\delta} = w'_1,
\end{equation}
and therefore the divisor $\widetilde{H}' \cap W$ admits as coordinate system the restriction to $\widetilde{H}' \cap W$ of:
$$(\widetilde{\pi'^* t'}_1, \chi'^* x'_1, \widetilde{x}'_{3,1},\dots,\widetilde{x}'_{N,1}).$$

Moreover it follows from equality \eqref{t' in V'_1} that the divisor with normal crossings $\widetilde{H}'_{\Delta'} \cap W$ in $\widetilde{H}' \cap W$ is defined by the following equation:
$$\chi'^*_{\mid \widetilde{H}' \cap W} \pi'^* t' = \widetilde{\pi'^* t'}_{1 \mid \widetilde{H}' \cap W} \, \chi'^*_{ \mid \widetilde{H}' \cap W} x'_{1} = 0.$$

Consequently the vector bundle $\omega^1_{\widetilde{H}'/C' | \widetilde{H}' \cap W}$ of rank $N-1$ admits the following local set of generators:
$$\big(\big[(d \widetilde{\pi'^* t'}_1 / \widetilde{\pi'^* t'}_1)_{ \mid \widetilde{H}' \cap W}\big], \chi'^*_{ \mid \widetilde{H}' \cap W} [d x'_1/x'_1], \big[d \widetilde{x}'_{3,1 \mid \widetilde{H}' \cap W}\big],\dots,\big[d \widetilde{x}'_{N,1  \mid \widetilde{H}' \cap W}\big]\big),$$
which satisfy the following relation:
$$\big[(d \widetilde{\pi'^* t'}_1 / \widetilde{\pi'^* t'}_1)_{ \mid \widetilde{H}' \cap W}\big] + \chi'^*_{ \mid \widetilde{H}' \cap W} [d x'_1/x'_1] = 0.$$

In particular, it admits the following local frame:
\begin{equation}\label{frame omega H' tilde C' V'_1}
 \big(\chi'^*_{\mid \widetilde{H}' \cap W} [d x'_1/x'_1], \big[d \widetilde{x}'_{3,1\mid \widetilde{H}' \cap W}\big],\dots, \big[d \widetilde{x}'_{N,1\mid \widetilde{H}' \cap W }\big]\big).
 \end{equation}

\subsubsection{} Now let us describe the vector bundle $\upsilon^* \omega^1_{\widetilde{H}/C}$ near $Q'$. Using successively the definition of the function $h_1$ on $V_1,$ the commutativity of the diagram \eqref{diag ups V'_1}, and the definition of the function~$h'_1$ on $V'_1$, one easily obtains the following equalities of functions on $V'_1$:
\begin{align}
\upsilon_{| V'_1}^* h_1 & = \upsilon _{| V'_1}^* (\chi^* x_1^{-\delta-1} \, \chi^* F_{> \delta}) \nonumber  \\
& = \chi'^* (x'^{-\delta-1}_1 \, \tau^* F_{> \delta} ) \nonumber \\
\label{h_1 on V'_1} &= h'_1.
\end{align}

Consequently, using first the definition of the function $w$ on $V_1$, then equalities \eqref{ups on V'_1 x_1 bis}, \eqref{ups on V'_1 x_i bis} and \eqref{h_1 on V'_1}, and finally the definition of the function $w'_1$ on $V'_1$, one obtains the following equalities of functions on $V'_1$:
\begin{align}
\upsilon_{| V'_1}^* w & = \upsilon_{| V'_1}^* F_\delta(1, \widetilde{x}_{2,1},\dots,\widetilde{x}_{N,1}) + \upsilon_{| V'_1}^* (\chi^* x_1 \, h_1) \nonumber \\
& = F_\delta(1, \widetilde{x}'_{2,1}, \dots, \widetilde{x}'_{N,1}) + \chi'^* x'_1 \,  h'_1 \nonumber \\
\label{both w on V'_1} & = w'_1.
\end{align}

Using first equality \eqref{both w on V'_1}, then the chain rule applied to the coordinate system $$(\widetilde{\pi^* t}_1, \chi^* x_1, \widetilde{x}_{2,1},\dots,\widetilde{x}_{N,1})$$ of $V_1,$ and then equalities \eqref{ups on V'_1 t bis}, \eqref{ups on V'_1 x_1 bis}, and \eqref{ups on V'_1 x_i bis}, we obtain the following expression for the partial derivative $\partial_{\widetilde{x}'_{2,1}} w'_1$ in the local coordinate system $(\widetilde{\pi'^* t'}_1, \chi'^* x'_1, \widetilde{x}'_{2,1},\dots,\widetilde{x}'_{N,1})$ of $V'_1$: 
\begin{align*}
\partial_{\widetilde{x}'_{2,1}} w'_1
 & = \partial_{\widetilde{x}'_{2,1}} (\upsilon_{| V'_1}^* w) \\
& = \partial_{\widetilde{x}'_{2,1}} (\upsilon_{| V'_1}^* \widetilde{\pi^* t}_1) \, \upsilon_{| V'_1}^* \partial_{\widetilde{\pi^* t}_1} w
 + \partial_{\widetilde{x}'_{2,1}} (\upsilon_{| V'_1}^* \chi^* x_1) \, \upsilon_{| V'_1}^* \partial_{\chi^* x_1} w \\
 & \quad + \sum_{j=2}^{N} \partial_{\widetilde{x}'_{2,1}} (\upsilon_{| V'_1}^* \widetilde{x}_{j,1}) \, \upsilon_{| V'_1}^* \partial_{\widetilde{x}_{j,1}} w\\
&= \partial_{\widetilde{x}'_{2,1}} (\widetilde{x}'_{2,1}) \, \upsilon^*_{| V'_1} \partial_{\widetilde{x}_{2,1}} w \\
& = \upsilon_{| V'_1}^* \partial_{\widetilde{x}_{2,1}} w.
\end{align*}

Consequently, since $\partial_{\widetilde{x}'_{2,1}} w'_1$ does not vanish at $Q'$, we obtain that $\partial_{\widetilde{x}_{2,1}} w$ does not vanish at $\upsilon(Q')$.

We can therefore apply the results of Subsection \ref{subsec desc omega H tilde C V_1} near $Q:= \upsilon(Q')$ with $i(Q) := 2.$ This allows us to describe a local frame of the vector bundle $\upsilon_{| \widetilde{H}' \cap V'_1}^*\omega^1_{\widetilde{H}/C}$, given by pulling back by $\upsilon_{| \widetilde{H}' \cap V'_1}$ the local frame \eqref{frame omega H tilde C} of the vector bundle $\omega^1_{\widetilde{H}/C | V_1}$; its components are:
$$\upsilon_{| \widetilde{H}' \cap V'_1}^* \chi^* [dx_1/x_1] = \chi'^*_{| \widetilde{H}' \cap V'_1} [d x'_1 / x'_1],$$
and for every $j$ in $\{3,\dots,N\}$:
$$\big[\upsilon_{| \widetilde{H}' \cap V'_1}^* d \widetilde{x}_{j,1}\big] = \big[d \widetilde{x}'_{j,1| \widetilde{H}' \cap V'_1}\big].$$

As in Case 1 above, comparing this frame with the frame \eqref{frame omega H' tilde C' V'_1} for the vector bundle $\omega^1_{\widetilde{H}' / C' | \widetilde{H}' \cap W}$ proves   that, over  $\widetilde{H}' \cap W$, the isomorphism $\phi$ indeed extends to an isomorphism $\widetilde{\phi}$ between the vector bundles  
$\upsilon^* \omega^1_{\widetilde{H}/C}$ and $\omega^1_{\widetilde{H}'/C'}$.
This concludes the proof of Proposition \ref{both omega iso} in Case~2.

\end{document}